\title{\vspace{-0.5cm}Stable rationality of higher dimensional conic bundles}
\author{\vspace{0cm} Hamid Ahmadinezhad and Takuzo Okada}
\institution{Department of Mathematical Sciences, Loughborough University, LE11 3TU, United Kingdom}\\
\email{h.ahmadinezhad@lboro.ac.uk}}\\
\institution{Department of Mathematics, Faculty of Science and Engineering, Saga University, Saga 840-8502 Japan}\\
\email{okada@cc.saga-u.ac.jp}}
\date{\vspace{-5ex}} 
\journal{\'Epijournal de G\'eom\'etrie Alg\'ebrique} 
\newtheorem{Thm}{Theorem}[section]
\newtheorem{Lem}[Thm]{Lemma}
\newtheorem{Cor}[Thm]{Corollary}
\newtheorem{Prop}[Thm]{Proposition}
\newtheorem{Conj}[Thm]{Conjecture}
\newtheorem{Def}[Thm]{Definition}
\newtheorem{Def-Lem}[Thm]{Definition-Lemma}
\newtheorem{Rem}[Thm]{Remark}
\newcommand{\Spec}{\operatorname{Spec}}
\newcommand{\Cox}{\operatorname{Cox}}
\newcommand{\Cl}{\operatorname{Cl}}
\newcommand{\mbA}{\mathbb{A}}
\newcommand{\mbC}{\mathbb{C}}
\newcommand{\mbP}{\mathbb{P}}
\newcommand{\mbQ}{\mathbb{Q}}
\newcommand{\mbZ}{\mathbb{Z}}
\newcommand{\mcB}{\mathcal{B}}
\newcommand{\mcE}{\mathcal{E}}
\newcommand{\mcL}{\mathcal{L}}
\newcommand{\mcM}{\mathcal{M}}
\newcommand{\mcN}{\mathcal{N}}
\newcommand{\mcO}{\mathcal{O}}
\newcommand{\mcQ}{\mathcal{Q}}
\newcommand{\mcX}{\mathcal{X}}
\newcommand{\msp}{\mathsf{p}}
\newcommand{\msq}{\mathsf{q}}
\newcommand{\K}{\Bbbk}
\newcommand{\inj}{\hookrightarrow}
\newcommand{\ratmap}{\dashrightarrow}
\newcommand{\CH}{\operatorname{CH}}
\begin{document}


\maketitle



\begin{prelims}


\def\abstractname{Abstract}
\abstract{We prove that a very general nonsingular conic bundle $X \to \mathbb{P}^{n-1}$ embedded in a projective vector bundle of rank $3$ over $\mathbb{P}^{n-1}$ is not stably rational if the anti-canonical divisor of $X$ is not ample and $n \ge 3$.}

\keywords{Stable Rationality; conic Bundles}

\MSCclass{14E08}

\vspace{0.15cm}

\languagesection{Fran\c{c}ais}{%

\textbf{Titre. Rationalit\'e stable des fibr\'es en coniques de grande dimension} \commentskip \textbf{R\'esum\'e.} Nous d\'emontrons qu'un fibr\'e en coniques non-singulier tr\`es g\'en\'eral
$X\to\mathbb{P}^{n-1}$ plong\'e dans le projectivis\'e d'un fibr\'e vectoriel de rang $3$ au dessus de $\mathbb{P}^{n-1}$ n'est pas stablement rationnel lorsque le diviseur anti-canonique de $X$ n'est pas ample et $n\geq 3$.}

\end{prelims}


\newpage

\setcounter{tocdepth}{1} \tableofcontents

\section{Introduction} \label{sec:intro}

An important question in algebraic geometry is to determine whether an algebraic variety is rational; that is, birational to projective space. Two algebraic varieties are said to be birational if they become isomorphic after removing finitely many lower-dimensional subvarieties from both sides. The closest varieties to being rational are those that admit a fibration into a projective space with all fibres rational curves; so-called conic bundles. 

In this article, we study stable (non-)rationality of conic bundles  over a projective space of arbitrary dimension (greater than one). A non-rational variety $X$ may become rational after being multiplied by a suitable projective space, i.e., $X\times \mbP^m$ is birational to $\mbP^{n+m}$, where $n=\dim X$, in which case we say $X$ is stably rational.

Stable non-rationality of conic bundles in dimension $3$ has been studied extensively in \cite{ABBP, BB} and \cite{HKT}, giving a satisfactory answer.
In higher dimensions almost nothing is known except for a few examples of stably non-rational conic bundles over $\mbP^3$ given in \cite{ABBP} and \cite{HPT}.

Throughout this article, by a conic bundle we mean a Mori fibre space of relative dimension $1$ (see Definition \ref{def:cb} for details). The following is our main result.

\begin{Thm} \label{thm:ample}
Let $n \ge 3$ and $d$ be integers, and let $\mcE$ be a direct sum of three invertible sheaves on $\mbP^{n-1}$.
Let $X$ be a very general member of a complete linear system $|2 D + d F|$ on $\mbP_{\mbP^{n-1}} (\mcE)$, where $D$ is the tautological divisor and $F$ is the pullback of the hyperplane on $\mbP^{n-1}$.
Suppose that the natural projection $X \to \mbP^{n-1}$ is a conic bundle.
\begin{itemize}
\item[\rm (1)] If $X$ is singular, then $X$ is rational.
\item[\rm (2)] If $X$ is non-singular and $-K_X$ is not ample, then $X$ is not stably rational.
\end{itemize}
\end{Thm}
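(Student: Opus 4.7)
The two parts call for quite different techniques.

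\textit{Part (1).} The plan is to show that any singular point $p \in X$ yields a rational section of $\pi \colon X \to \mbP^{n-1}$, since a conic bundle over $\mbP^{n-1}$ that admits a rational section is birational to $\mbP^1 \times \mbP^{n-1}$ and hence rational. Writing $q = \pi(p)$, the point $p$ must be a singular point of the plane conic $X_q \subset \mbP(\mcE_q)$, since the derivatives of the defining quadratic form in the $\mbP^2$-direction must vanish at $p$. Hence $X_q$ has rank at most two and $p$ is either the node of a pair of lines or a point on a double line. In either case I plan to choose local coordinates on $\mbP^{n-1}$ at $q$ and trivialize $\mcE$ so that the defining quadratic form takes the shape
\[
y_0^2 + A(x) y_1 y_2 + B(x) y_1^2 + C(x) y_2^2,
\]
with $A, B, C$ vanishing at $q$, and $p = (0\colon 1\colon 0)$ or $(0\colon 0\colon 1)$. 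Then the branch of $X$ lying along $\{y_0 = 0\}$ over a neighborhood of $q$ supplies the desired rational section.

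\textit{Part (2).} The plan is to apply the specialization technique of Voisin, as refined by Colliot-Th\'el\`ene--Pirutka, combined with the Artin--Mumford and Colliot-Th\'el\`ene--Ojanguren description of the unramified Brauer group of a conic bundle. The argument divides into three steps.

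First, I construct a one-parameter flat degeneration $\mcX \to T$ inside the linear system $|2D + dF|$ whose very general fiber is the conic bundle of interest, and whose special fiber $X_0 \subset \mbP_{\mbP^{n-1}}(\mcE)$ is defined by a rank-three quadratic form whose Brauer class over $k(\mbP^{n-1})$ equals the quaternion symbol $(f, g)$ for explicit homogeneous polynomials $f, g$ on $\mbP^{n-1}$ of prescribed degrees. Second, I verify that the singular locus of $X_0$, which lies over the codimension-two subscheme $\{f = g = 0\} \subset \mbP^{n-1}$, consists of sufficiently mild (at worst ordinary nodal) singularities admitting a proper birational morphism $\widetilde{X_0} \to X_0$ from a smooth variety which is universally $\CH_0$-trivial, and simultaneously that the symbol $(f, g)$ defines a nontrivial class in $\operatorname{Br}_{\rm nr}(k(\widetilde{X_0})/k)$ via the residue formulas of Colliot-Th\'el\`ene--Ojanguren. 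Third, the specialization theorem of Colliot-Th\'el\`ene--Pirutka transfers non-stable-rationality from $X_0$ to the very general member of the family.

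The main obstacle is Step 2: the polynomials $f$ and $g$ must be tuned so that $X_0$ has only singularities whose resolutions contribute no $\CH_0$-classes, while $(f, g)$ still has nonzero residues supported on divisors that do not become trivial after pullback to $\widetilde{X_0}$. This is where the hypothesis that $-K_X$ is not ample enters decisively: it gives the numerical inequality on $d$ and the splitting type of $\mcE$ needed to choose $f$ and $g$ of large enough degrees to force a nontrivial residue pattern, an option not available when $-K_X$ is ample. Checking $\CH_0$-triviality of the resolution over the codimension-two stratum of double lines is anticipated to be the most technically involved step, and likely requires an explicit local analysis of the blow-up of the nodal locus.
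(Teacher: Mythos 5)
Your plan for part (1) has a genuine gap. From a singular point $p\in X$ over $q\in\mbP^{n-1}$ you only learn that the single fibre $X_q$ is a degenerate conic; its components are contracted by $\pi$, and the locus $X\cap\{y_0=0\}$ in your normal form maps generically $2\!:\!1$ to the base, so it is a bisection, not a section. Rationality of the conic bundle requires a $k(\mbP^{n-1})$-point of the \emph{generic} fibre, which is a smooth conic, and no local branch near one degenerate fibre produces that (diagonal conic bundles $ax^2+by^2+cz^2=0$ can have singular total space while the associated quaternion algebra over $k(\mbP^{n-1})$ is nontrivial). The paper's argument is instead global and numerical: writing $\mcE\cong\mcO(-k)\oplus\mcO(-l)\oplus\mcO(-m)$ with $k\le l\le m$, a general singular member forces $2m>d\ge k+m$, so the coefficient of $z^2$ would have negative degree and the equation is $z(gx+hy)+ax^2+by^2+fxy=0$, linear in $z$; projection away from $z$ is then birational onto the $\mbP^1$-bundle $Q_n(k,l)$ (equivalently, $(x=y=0)$ is an honest section). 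Without this degree analysis your part (1) does not close.

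For part (2) you propose the unramified Brauer group route (Artin--Mumford, Colliot-Th\'el\`ene--Ojanguren), which is genuinely different from the paper's method and, as outlined, does not suffice in this generality. The paper degenerates to characteristic $2$, where a member $ax^2+by^2+cz^2+fxy=0$ becomes an inseparable double cover $\tau\colon X\to Z$ branched in $(\bar z=0)$; Koll\'ar's construction yields an invertible subsheaf $\mcM\cong\tau^*(\omega_Z\otimes\mcL^2)\cong\mcO_X(\nu-n,0)$ of $(\Omega_X^{n-1})^{\vee\vee}$, which has sections precisely when $\nu\ge n$, i.e.\ exactly when $-K_X$ fails to be ample; a universally $\CH_0$-trivial resolution $Y$ then satisfies $H^0(Y,\Omega_Y^{n-1})\ne0$, hence is not universally $\CH_0$-trivial by Totaro's lemma, and Colliot-Th\'el\`ene--Pirutka specialization concludes. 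Two points break in your version: (i) the singular locus of your $X_0$ lies over $\{f=g=0\}$ and has codimension $2$ in $X_0$, so the singularities are not isolated nodes and universal $\CH_0$-triviality of a resolution needs a full Pirutka-type analysis, which you only gesture at; (ii) more seriously, you give no mechanism forcing the symbol $(f,g)$ to be unramified and nontrivial on a resolution when the base has dimension $\ge 3$ --- the known Brauer-class conic bundles over $\mbP^3$ are sporadic constructions, and ``$f,g$ of large degree'' does not by itself prevent the residues from trivializing the class. As written, your Step 2 is an assertion rather than a proof, and it is precisely to avoid this that the paper uses differential forms in characteristic $2$.
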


This result covers the following varieties as a special case.

\begin{Cor} \label{cor1}
Let $X$ be a very general hypersurface of bi-degree $(d,2)$ in $\mbP^{n-1} \times \mbP^2$.
If $d \ge n \ge 3$, then $X$ is not stably rational.
\end{Cor}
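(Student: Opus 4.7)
The plan is to derive Corollary \ref{cor1} directly from Theorem \ref{thm:ample}(2) by presenting $\mbP^{n-1}\times\mbP^2$ as the trivial projective bundle $\mbP_{\mbP^{n-1}}(\mcO_{\mbP^{n-1}}^{\oplus 3})$. Under this identification the tautological divisor $D$ is the pullback of the hyperplane class from the $\mbP^2$ factor while $F$ is the pullback from $\mbP^{n-1}$. By K\"unneth,
\[
H^0(\mcO(2D+dF)) = H^0(\mbP^{n-1},\mcO(d))\otimes H^0(\mbP^2,\mcO(2)),
\]
so $|2D+dF|$ is precisely the complete linear system of bi-degree $(d,2)$ hypersurfaces. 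Because this linear system is base-point-free, a very general member $X$ is smooth by Bertini and meets a general fibre $\{p\}\times\mbP^2$ in a smooth plane conic; combined with $\rho(X)=\rho(\mbP^{n-1}\times\mbP^2)=2$ via Lefschetz (applicable since $\dim X=n\ge 3$), this shows that $X\to\mbP^{n-1}$ is a conic bundle in the Mori-fibre-space sense used in the paper.

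The next step is to verify that $-K_X$ is not ample when $d\ge n$. Since $c_1(\mcO_{\mbP^{n-1}}^{\oplus 3})=0$, one has $K_Y=-3D-nF$ on $Y=\mbP^{n-1}\times\mbP^2$, and adjunction gives
\[
-K_X=\bigl(D+(n-d)F\bigr)\big|_X.
\]
I then exhibit a curve on which this class is non-positive. Consider the second projection $\pi_2\colon Y\to\mbP^2$; its restriction to $X$ has fibre over $q\in\mbP^2$ equal to $X\cap(\mbP^{n-1}\times\{q\})$, a hypersurface of degree $d$ in $\mbP^{n-1}$ of dimension $n-2\ge 1$. Pick any curve $C$ inside this fibre (the fibre itself when $n=3$, or a generic-hyperplane section otherwise). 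Since $C$ maps to the single point $q$ under $\pi_2$ we have $D\cdot C=0$, while $F\cdot C=\deg_{\mbP^{n-1}} C>0$. Hence
\[
-K_X\cdot C=(n-d)(F\cdot C)\le 0,
\]
so $-K_X$ is not ample. Theorem \ref{thm:ample}(2) now applies and yields that $X$ is not stably rational.

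The only substantive step is this last non-ampleness check, and it is precisely here that $n\ge 3$ is used, so that the fibres of $X\to\mbP^2$ are positive-dimensional and supply test curves on which $-K_X$ has non-positive intersection; everything else is a bookkeeping exercise with the canonical bundle formula for $\mbP(\mcE)$ and the Lefschetz hyperplane theorem.
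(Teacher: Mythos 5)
Your proposal is correct and follows essentially the same route as the paper: both identify $\mbP^{n-1}\times\mbP^2$ with the trivial rank-$3$ projective bundle, compute $-K_X=\mcO_X(n-d,1)$ by adjunction, observe that $d\ge n$ forces $-K_X$ to be non-ample, and conclude via Theorem \ref{thm:ample}(2). You merely make explicit two steps the paper leaves implicit (that $X\to\mbP^{n-1}$ is a conic bundle, via Bertini and Lefschetz, and the test curve witnessing non-ampleness), which is harmless.
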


This can be thought of as a higher dimensional generalisation of the main result of \cite{BB}.

\begin{Cor} \label{cor2}
Let $X$ be a double cover of $\mbP^{n-1} \times \mbP^1$ branched along a very general divisor of bi-degree $(2d,2)$.
If $2 d \ge n \ge 3$, then $X$ is not stably rational.
\end{Cor}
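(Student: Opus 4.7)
The plan is to apply Theorem~\ref{thm:ample}(2) after realising $X$ as a very general hypersurface in a rank-three projective bundle over $\mbP^{n-1}$. Set $\mcE := \mcO_{\mbP^{n-1}} \oplus \mcO_{\mbP^{n-1}} \oplus \mcO_{\mbP^{n-1}}(-d)$ and equip $\mbP_{\mbP^{n-1}}(\mcE)$ with relative fibre coordinates $[y_0:y_1:w]$, where $y_0, y_1$ correspond to the trivial summands and $w$ to the $\mcO(-d)$-summand. A very general bi-degree $(2d,2)$ branching divisor on $\mbP^{n-1}\times\mbP^1$ can be written as $f_0(x) y_0^2 + 2 f_1(x) y_0 y_1 + f_2(x) y_1^2 = 0$ with $f_i \in H^0(\mbP^{n-1}, \mcO(2d))$, and the associated double cover is the hypersurface $w^2 = f_0 y_0^2 + 2 f_1 y_0 y_1 + f_2 y_1^2$ inside $\mbP(\mcE)$; a direct degree check, using $\pi_* \mcO(2D + 2dF) = \mathrm{Sym}^2 \mcE \otimes \mcO(2d)$, shows this is a member of $|2D + 2d F|$.

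To see that the double cover model captures a very general element of $|2D + 2d F|$, observe that a general section has non-zero coefficient of $w^2$ (the corresponding summand is $\mathrm{Sym}^2 \mcO(-d) \otimes \mcO(2d) = \mcO$), so completing the square in $w$ eliminates any cross terms of the form $y_i w$ and brings the equation into the double-cover shape. Hence the two families coincide and the corollary reduces to Theorem~\ref{thm:ample}(2) with this $\mcE$ and with the theorem's $d$ replaced by $2d$.

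It remains to verify the hypotheses of Theorem~\ref{thm:ample}(2). Non-singularity of a very general $X$ holds by Bertini, and the projection $X \to \mbP^{n-1}$ is a conic bundle because the fibres are plane conics in $\mbP(\mcE_x) \cong \mbP^2$. For the non-ampleness of $-K_X$, adjunction yields $-K_X = (D + (n-d) F)|_X$, and restricting the ambient class $D + (n-d) F$ to the distinguished section of $\mbP(\mcE) \to \mbP^{n-1}$ given by the quotient $\mcE \twoheadrightarrow \mcO(-d)$, along which $D$ has degree $-d$, one obtains the degree $n - 2d$. The hypothesis $2d \ge n$ of the corollary is exactly the statement that this is non-positive, matching the non-ampleness condition of the theorem. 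The main subtlety of the argument is precisely this translation of the bi-degree bound $2d \ge n$ into the anti-canonical non-ampleness condition demanded by Theorem~\ref{thm:ample}.
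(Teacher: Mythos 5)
Your identification of the double cover with a very general member of $|2D+2dF|$ on $\mbP(\mcO\oplus\mcO\oplus\mcO(-d))$, via completing the square in $w$, is correct and is exactly the paper's starting point. The proposal fails at the last step, however: the translation of $2d\ge n$ into non-ampleness of $-K_X$ is wrong, and this breaks the reduction to Theorem~\ref{thm:ample}(2). The section of $\mbP(\mcE)\to\mbP^{n-1}$ cut out by $y_0=y_1=0$, on which you compute the degree $n-2d$, is \emph{disjoint from $X$}: after completing the square the equation reads $w^2=q(y_0,y_1)$ with $q$ a fibrewise quadratic form, and along $y_0=y_1=0$ one has $w\ne 0$, so no point of that section lies on $X$. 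The degree of $D+(n-d)F$ on a line in that section therefore says nothing about the positivity of $-K_X$ on curves of $X$. In fact the projection away from the $w$-coordinate restricts to a \emph{finite} double cover $\rho\colon X\to\mbP^{n-1}\times\mbP^1$ with $D|_X=\rho^*\mcO(0,1)$ and $F|_X=\rho^*\mcO(1,0)$, so that $-K_X=\rho^*\mcO(n-d,1)$ is ample precisely when $n>d$. Hence in the whole range $d<n\le 2d$ the variety $X$ is Fano, the hypothesis of Theorem~\ref{thm:ample}(2) fails, and your argument proves nothing there (for instance $n=3$, $d=2$).

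The corollary is nevertheless true in that range, and the paper obtains it by bypassing Theorem~\ref{thm:ample} entirely: rescaling the grading identifies $X$ with an embedded conic bundle of type $[d,d,2d]$, and Theorem~\ref{thm:genthm} applies under the hypothesis $\nu=2d\ge n$, which is exactly the assumed bound and is strictly weaker than non-ampleness of $-K_X$ for this family (Theorem~\ref{thm:ample}(2) would only cover $d\ge n$). To repair your argument you should therefore invoke Theorem~\ref{thm:genthm} rather than Theorem~\ref{thm:ample}(2). A secondary point: asserting that $\pi$ is a conic bundle ``because the fibres are plane conics'' does not verify the Mori fibre space conditions of Definition~\ref{def:cb}; in the paper this is Lemma~\ref{lem:classifnonsingcb}(2) applied with $(k,l,m)=(0,0,d)$.
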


%
%

By a result of Sarkisov \cite{Sarkisov}, a conic bundle is birational to a standard conic bundle which is by definition a nonsingular conic bundle flat over a smooth base. The following criterion for rationality in terms of the discriminant was conjectured by Shokurov \cite{Shokurov} (see also \cite[Conjecture~I]{Iskovskikh}). Remarkabe progress toward this conjecture has been made in \cite{Iskovskikh} and \cite{MP}.

\begin{Conj}[{\cite[Conjecture~10.3]{Shokurov}}]\label{Isk}
Let $X \to S$ be a $3$-dimensional standard conic bundle and $\Delta \subset S$ the discriminant divisor.
If $|2 K_S + \Delta| \ne \emptyset$, then $X$ is not rational. 
\end{Conj}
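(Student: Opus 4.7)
The plan is to run the intermediate Jacobian obstruction of Clemens--Griffiths. Assume for contradiction that $X$ is rational. Then the intermediate Jacobian $(J(X),\Theta_X)$, viewed as a principally polarized abelian variety (PPAV), must split as a product $\prod_i(J(C_i),\Theta_{C_i})$ of Jacobians of smooth projective curves. The conjecture is therefore reduced to producing, from the numerical hypothesis $|2K_S+\Delta|\ne\emptyset$, an obstruction to any such splitting.

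The first step is to identify $J(X)$ with a Prym variety. The conic bundle structure $X\to S$ determines, via the two rulings of each fibre over $\Delta$, a canonical admissible \'etale double cover $\pi\colon\tilde{\Delta}\to\Delta$, and Beauville's theorem gives an isomorphism of PPAVs $J(X)\cong P(\tilde{\Delta}/\Delta)$. The problem is thus equivalent to the purely Prym-theoretic statement: if $|2K_S+\Delta|\ne\emptyset$, then $P(\tilde{\Delta}/\Delta)$ is not isomorphic to a product of Jacobians of smooth curves.

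The second step is to translate the numerical hypothesis into geometric information about the pair $(S,\Delta)$. The key phenomenon is that $|2K_S+\Delta|\ne\emptyset$ rules out ``too many'' quasi-lines on $(S,\Delta)$: for a smooth rational curve $L\subset S$ meeting $\Delta$ transversely in exactly two points, the intersection $L\cdot(2K_S+\Delta)$ tends to be strictly negative, so such $L$ must be contained in the fixed part of $|2K_S+\Delta|$ and in particular cannot sweep out $S$. This is the geometric input one feeds into the Prym analysis.

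The main obstacle is the final step, converting ``scarcity of quasi-lines'' into ``Prym not a product of Jacobians''. Following Shokurov's approach, I would use his singularity criterion for the Prym theta divisor (distinguishing stable from exceptional singularities) to argue that any decomposition $P(\tilde{\Delta}/\Delta)\cong\prod_i J(C_i)$ forces the existence of a linear system of rational curves on $S$ behaving as quasi-lines, contradicting the hypothesis. Concretely, I would attempt induction on $\dim P(\tilde{\Delta}/\Delta)$, degenerating $(\Delta,\tilde{\Delta})$ within the quasi-admissible locus and using compatibility of the Prym with such degenerations to reduce to the base cases handled in Iskovskikh and Mella--Prokhorov. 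Making this step unconditional is precisely where the conjecture remains open, so a genuinely new idea for controlling Prym splittings in full generality would almost certainly be required to complete the argument.
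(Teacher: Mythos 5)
The statement you are trying to prove is not a theorem of the paper at all: it is Shokurov's Conjecture 10.3, reproduced verbatim as Conjecture \ref{Isk}, and the paper offers no proof of it. The authors only record that partial progress exists in \cite{Iskovskikh} and \cite{MP}, and then use the conjecture as motivation for restating their main result in terms of the discriminant (Corollary \ref{cor:discr}) and for posing their own Conjecture 1.6. So there is no argument in the paper to compare yours against; the question is only whether your outline constitutes a proof, and it does not.

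Your first two steps are sound and standard: for a three-dimensional standard conic bundle the Clemens--Griffiths criterion applies, and Beauville's isomorphism $J(X)\cong P(\tilde{\Delta}/\Delta)$ as PPAVs reduces the problem to showing that the Prym is not a product of Jacobians of smooth curves. The genuine gap is exactly the one you name yourself in the final paragraph: deducing from $|2K_S+\Delta|\ne\emptyset$ that $P(\tilde{\Delta}/\Delta)$ admits no such splitting. This implication is known only for special bases (Shokurov settled $S=\mathbb{P}^2$ and minimal ruled surfaces via his analysis of singularities of the Prym theta divisor; Iskovskikh and Mori--Prokhorov extended the reach of the method), and no degeneration or induction scheme of the kind you sketch is known to close it for an arbitrary rational surface $S$. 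There is also a structural caveat you should be aware of: even granting the reduction, it is not known that the hypothesis forces the Prym to be non-split --- conceivably the intermediate Jacobian obstruction simply vanishes in some cases covered by the conjecture, in which case this entire approach could not succeed and a different obstruction (unramified cohomology, decomposition of the diagonal, as the paper uses for its actual results) would be needed. In short, your proposal is a correct description of the known strategy and of why it stalls, but it is not a proof, and no proof is currently available.
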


Although the statement becomes weaker than Theorem \ref{thm:ample}, we can restate our main result in terms of the discriminant:

\begin{Cor} \label{cor:discr}
With notation and assumptions as in \emph{Theorem \ref{thm:ample}}, assume in addition that $X$ is nonsingular and let $\Delta \subset \mbP^{n-1}$ be the discriminant divisor of the conic bundle $X \to \mbP^{n-1}$.
\begin{itemize}
\item[\rm (1)] If $|3 K_{\mbP^{n-1}} + \Delta| \ne \emptyset$, then $X$ is not stably rational.
\item[\rm (2)] If $n \ge 7$, $\pi \colon X \to \mbP^{n-1}$ is standard and $|2 K_{\mbP^{n-1}} + \Delta| \ne \emptyset$, then $X$ is not stably rational.
\end{itemize}
\end{Cor}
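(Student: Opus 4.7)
The plan is to reduce each part of Corollary~\ref{cor:discr} to Theorem~\ref{thm:ample}(2) by translating the discriminant hypothesis into the assertion that $-K_X$ is not ample. Write $\mcE = \mcO(a_1) \oplus \mcO(a_2) \oplus \mcO(a_3)$ with $a_1 \le a_2 \le a_3$, and set $\sigma = a_1 + a_2 + a_3$. Adjunction on $\mbP(\mcE)$ gives $-K_X = (D + (n - \sigma - d)F)|_X$, and a direct positivity analysis on the split projective bundle shows that $-K_X$ is ample precisely when $a_2 + a_3 + d < n$. Meanwhile $\Delta$ is cut out by $\det M$, where $M = (c_{ij})$ is the symmetric matrix of the defining quadratic form with $\deg c_{ij} = a_i + a_j + d$, so $\deg \Delta = 2\sigma + 3d$, and the condition $|k K_{\mbP^{n-1}} + \Delta| \ne \emptyset$ is equivalent to $2\sigma + 3d \ge kn$.

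For (1), the elementary inequality $2 a_1 \le a_2 + a_3$ yields $2\sigma + 3d \le 3(a_2 + a_3 + d)$, so the hypothesis $2\sigma + 3d \ge 3n$ immediately forces $a_2 + a_3 + d \ge n$; hence $-K_X$ is not ample, and Theorem~\ref{thm:ample}(2) applies.

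For (2), I would first extract a numerical constraint from the standardness hypothesis. If every entry $c_{ij}$ had strictly positive degree, then the six forms $c_{ij}$ with $1 \le i \le j \le 3$ would have common zero locus of codimension at most six on $\mbP^{n-1}$, hence nonempty for $n \ge 7$, contradicting flatness of $\pi$. Therefore some pair $(i,j)$ with $i \le j$ must satisfy $a_i + a_j + d = 0$. A short case analysis on this pair then suffices: in each of $(1,1), (1,2), (2,2), (1,3)$ a substitution combined with $a_1 \le a_2 \le a_3$ yields $a_2 + a_3 + d \ge n$ from $2\sigma + 3d \ge 2n$, while the remaining pairs $(2,3)$ and $(3,3)$ would force $2\sigma + 3d \le 0$ and are excluded by the hypothesis. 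In every admissible case $-K_X$ is not ample and Theorem~\ref{thm:ample}(2) applies. The main subtlety I anticipate is precisely this last step, where the ordering on the $a_i$ and the discriminant bound $2\sigma + 3d \ge 2n$ must be combined carefully to rule out two of the six pairs; once this numerical book-keeping is in place, the rest is routine.
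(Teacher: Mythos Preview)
Your reduction to Theorem~\ref{thm:ample}(2) rests on the claim that $-K_X$ is ample \emph{precisely} when $a_2+a_3+d<n$, but only the implication $a_2+a_3+d<n\Rightarrow -K_X$ ample follows from positivity on the ambient bundle; the converse, which is the direction you actually use, fails. In the normal form of Proposition~\ref{prop:nonsingcbfam} your criterion reads $-K_X$ ample $\Leftrightarrow \nu<n$. When $\nu<\lambda+\mu$ this is correct, because the section $(x=y=0)$ meets $X$ and supplies test curves on which $\mcO_X(n,1)$ has degree $n-\nu$. But when $\nu=\lambda+\mu$ --- exactly the case forced by standardness in part~(2) --- the coefficient $c$ is a nonzero scalar, $(x=y=0)\cap X=\emptyset$, and after completing the square $X$ becomes a finite double cover $\psi\colon X\to Q_n(\lambda,\mu)$. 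Since pullback along a finite surjection reflects ampleness, $-K_X=\psi^*\mcO_Q(n,1)$ is ample iff $\mcO_Q(n,1)$ is ample on $Q_n(\lambda,\mu)$, i.e.\ iff $\mu<n$, not $\nu<n$. Concretely, for $n=4$ and $[\lambda,\mu,\nu]=[3,3,6]$ one has $\deg\Delta=12=3n$, so the hypothesis of~(1) holds, yet $X$ is a double cover of $\mbP^3\times\mbP^1$ with $-K_X$ ample; for $n=7$ and $[\lambda,\mu,\nu]=[1,6,7]$ the conic bundle is standard, $\deg\Delta=14=2n$ so the hypothesis of~(2) holds, and again $\mu=6<7=n$ gives $-K_X$ ample. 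In these examples Theorem~\ref{thm:ample}(2) does not apply at all.

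The paper sidesteps this by invoking Theorem~\ref{thm:genthm} directly rather than Theorem~\ref{thm:ample}(2): the hypothesis there is the bare numerical condition $\nu\ge n$, and indeed the paper's proof of Theorem~\ref{thm:ample}(2) itself passes through Theorem~\ref{thm:genthm} via the (valid) contrapositive $-K_X$ not ample $\Rightarrow\nu\ge n$. Your intermediate numerics are fine and already deliver what Theorem~\ref{thm:genthm} needs: the inequality $2\sigma+3d\le 3(a_2+a_3+d)$ for~(1), and $2a_1+d=0$ from flatness for~(2), whence $2\sigma+3d=2(a_2+a_3+d)$, both yield $a_2+a_3+d\ge n$, which in normal form is exactly $\nu\ge n$. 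So the fix is simply to invoke Theorem~\ref{thm:genthm} at that point and drop the detour through ampleness of $-K_X$.
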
 

This leads us to pose the following.

\begin{Conj}
Let $\pi \colon X \to S$ be an $n$-dimensional standard conic bundle with $n \ge 3$.
If $\left| 2 K_S + \Delta \right| \ne \emptyset$, then $X$ is not rational.
If in addition $X$ is very general in its moduli, then $X$ is not stably rational.
\end{Conj}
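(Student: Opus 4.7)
The plan is to attack the stable non-rationality statement via the specialisation method of Voisin and Colliot-Th\'el\`ene--Pirutka, and to deduce the plain non-rationality statement either from this or, in low dimensions, from classical invariants such as the intermediate Jacobian. The core task is to exhibit, for each pair $(S,\Delta)$ with $|2K_S+\Delta|\neq\emptyset$, a singular specialisation $X_0$ of $X$ that admits a universally $\CH_0$-trivial resolution $\widetilde{X}_0 \to X_0$ whose unramified Brauer group is non-trivial. Once such a model is constructed, the specialisation theorem yields stable non-rationality of a very general $X$ in moduli, and the non-rationality statement follows \emph{a fortiori}.

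First I would reduce to the case of a rational base $S$. If $\sigma \colon S' \to S$ is a birational morphism from a rational variety, the pulled-back conic bundle $X \times_S S' \to S'$ is stably birational to $X$, and its discriminant $\sigma^{-1}\Delta$ still satisfies the sharp numerical hypothesis after a short bookkeeping using $K_{S'}=\sigma^{*}K_S + E$. For $S=\mbP^{n-1}$, the task is then to interpolate between Theorem \ref{thm:ample}, which uses the stronger assumption that $-K_X$ is not ample, and the conjectural Shokurov--Iskovskikh threshold $|2K_S+\Delta|\neq\emptyset$.

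The central construction I would attempt is a degeneration to an Artin--Mumford type model: writing the conic as $\alpha x_0^2+\beta x_1^2+\gamma x_2^2 = 0$ with $\alpha,\beta,\gamma$ sections of suitable line bundles on $S$, one specialises so that the discriminant $\Delta = \{\alpha\beta\gamma=0\}$ carries a non-trivial $2$-torsion class on its function field, coming from a double cover $\widetilde{\Delta}\to\Delta$ of Artin--Mumford type. The effectivity of $2K_S+\Delta$ is precisely what lets the residues of this symbol, computed via the adjunction formula on each component of $\Delta$, assemble into a non-trivial global unramified class on $\widetilde{X}_0$. The local analysis to verify that this class extends across all exceptional components of the resolution would follow the template of the lower-dimensional cases but carried out in codimension up to $n-1$.

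The main obstacle is twofold. One must control the singularities of $X_0$ precisely enough to produce a universally $\CH_0$-trivial resolution, while at the same time keeping the Brauer class unramified on that resolution. Near the sharp numerical bound the available room is minimal, and the proof of Theorem \ref{thm:ample}(2) genuinely uses the stronger condition on $-K_X$ to secure enough flexibility in the degeneration; weakening it to $|2K_S+\Delta|\neq\emptyset$ seems to require entirely new degeneration patterns. Moreover, extending the analysis to arbitrary $S$ rather than $\mbP^{n-1}$ introduces a further layer of difficulty, since the unramified cohomology of the resolution depends on the intricate geometry of $\Delta\subset S$ and not merely on numerical invariants. A full proof is likely to require the finer unramified cohomology techniques developed by Schreieder and Pirutka for conic bundles over higher-dimensional bases, together with a delicate deformation-theoretic construction of the degeneration on a general rational base.
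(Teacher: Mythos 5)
This statement is posed in the paper as a \emph{conjecture}: the paper offers no proof of it, and only establishes special cases (Theorem \ref{thm:ample} under the stronger hypothesis that $-K_X$ is not ample, and Corollary \ref{cor:discr}(2), which needs $S = \mbP^{n-1}$, $n \ge 7$, and very generality). Your proposal is likewise not a proof but a research outline, and you say so yourself: the construction of the degeneration, the verification that it admits a universally $\CH_0$-trivial resolution, and the proof that the Brauer class stays unramified on that resolution are all left open, and you acknowledge that ``entirely new degeneration patterns'' would be needed near the threshold $|2K_S+\Delta| \ne \emptyset$. So the entire content of the statement remains unproved. Note also that your chosen invariant differs from the one the paper actually uses for its partial results: the paper does not work with unramified Brauer groups at all, but with Koll\'ar's method of differential forms in characteristic $2$, producing a resolution $Y \to X_0$ with $H^0(Y,\Omega_Y^{n-1}) \ne 0$ and invoking Totaro's criterion.

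Two concrete logical errors on top of the incompleteness. First, the conjecture's opening claim is that \emph{every} standard conic bundle with $|2K_S+\Delta| \ne \emptyset$ is non-rational, with no genericity assumption; this cannot follow ``a fortiori'' from a stable non-rationality statement that holds only for a very general member of the moduli, since specialisation arguments say nothing about special members. (This is exactly why the paper keeps the two sentences of the conjecture separate, and why the first sentence is the Shokurov--Iskovskikh conjecture, which is itself open in dimension $3$.) Second, the reduction to a rational base is unjustified: for a general base $S$ there is no reason a birational modification $S' \to S$ with $S'$ rational exists, and even when $S$ is rational the bookkeeping with $K_{S'} = \sigma^*K_S + E$ does not obviously preserve the condition $|2K_{S'} + \Delta'| \ne \emptyset$ for the transformed discriminant, nor does the pulled-back family remain a standard conic bundle without further work.
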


\paragraph*{\bf The argument of stable non-rationality.}
It is known that a stably rational smooth projective variety is universally $\CH_0$-trivial; see \cite[Lemme~1.5]{CTP1} and \cite[theorem~1.1]{Totaro} and references therein.
Let $\mathcal{X}\rightarrow\mathcal{B}$ be a flat family over a complex curve $\mathcal{B}$ with smooth general fibre. Then, by the specialisation theorem of Voisin \cite[Theorem~2.1]{Voisin}, the stable non-rationality of a very general fibre will follow if the special fibre $X_0$ is not universally $\CH_0$-trivial and has at worst ordinary double point singularities.  This was generalised by Colliot-Th\'el\`ene and Pirutka \cite[Th\'eor\`eme 1.14]{CTP1} to the case where  
\begin{itemize}
\item[\rm 1.] $X_0$ admits a universally $\CH_0$-trivial resolution $\varphi \colon Y \to X_0$ such that $Y$ is not universally $\CH_0$-trivial,
\item[\rm 2.] in mixed characteristic, that is, when $\mcB = \Spec A$ with $A$ being a  DVR of possibly mixed characteristic.
\end{itemize}
Thus it is enough to verify the existence of such a resolution $\varphi \colon Y \to X_0$ over an algebraically closed field of characteristic $p > 0$.
In view of \cite[Lemma~2.2]{Totaro}, the core of the proof of universal $\CH_0$-nontriviality for $Y$ in our case is done by showing that $H^0(Y,\Omega^i)\neq 0$ for some $i>0$, following Koll\'ar \cite{Kollar1} and Totaro \cite{Totaro}. This is done in Section~\ref{stable-rationality}.

\paragraph*{Acknowledgements.}
We would like to thank Professor Jean-Louis Colliot-Th\'el\`ene  and Professor Vyacheslav Shokurov for pointing out two oversights in an earlier version of this article. We would also like to thank the anonymous referee for helpful comments. The second author is partially supported by JSPS KAKENHI Grant Number 26800019.

\section{Embedded conic bundles}

\subsection{Weighted projective space bundles}

In this subsection we work over a field $k$.

\begin{Def} \label{def:wpsbundle}{\rm
A {\it toric weighted projective space bundle over} $\mbP^n$ is a projective simplicial toric variety with Cox ring
\[
\Cox (P) = k [u_0,\dots,u_n,x_0,\dots,x_m],
\]
which is $\mbZ^2$-graded as
\[
\begin{pmatrix}
1 & \cdots & 1 & \lambda_0 & \cdots & \lambda_m \\
0 & \cdots & 0 & a_0 & \cdots & a_m
\end{pmatrix}
\]
with the irrelevant ideal $I = (u_0,\dots,u_n) \cap (x_0,\dots,x_m)$, where $\lambda_0,\dots,\lambda_m$ are integers and $n$, $m$, $a_0,\dots,a_m$ are positive integers.
In other words, $P$ is the geometric quotient 
\[
P = (\mbA^{n+m+2} \setminus V (I))/\mathbb{G}_m^2,
\]
where the action of $\mathbb{G}_m^2 = \mathbb{G}_m \times \mathbb{G}_m$ on $\mbA^{n+m+2} = \Spec \Cox (P)$ is given by the above matrix.}
\end{Def}

The natural projection $\Pi\colon P \to \mbP^{n}$ by the coordinates $u_0,\dots,u_n$ realizes $P$ as a $\mbP (a_0,\dots,a_m)$-bundle over $\mbP^n$.
In this paper, we simply call $P$ the {\it $\mbP (a_0,\dots,a_m)$-bundle over $\mbP^n$ defined by}
\[
\begin{pmatrix}
u_0 & \cdots & u_n & & x_0 & \cdots & x_m \\
1 & \cdots & 1 & | & \lambda_0 & \cdots & \lambda_m \\
0 & \cdots & 0 & | & a_0 & \cdots & a_m
\end{pmatrix}.
\]

In the following, let $P$ be as in Definition \ref{def:wpsbundle}.
Let $\msp \in P$ be a point and $\msq \in \mbA^{n+m+2} \setminus V (I)$ a preimage of $\msp$ via the morphism $\mbA^{n+m+2} \setminus V (I) \to P$.
We can write $\msq = (\alpha_0,\dots,\alpha_n,\beta_0,\dots,\beta_m)$, where $\alpha_i,\beta_j \in k$.
In this case we express $\msp$ as $(\alpha_0\!:\!\cdots\!:\!\alpha_n;\beta_0\!:\!\cdots\!:\!\beta_m)$.

\begin{Rem}{\rm
We will frequently use the following coordinate change.
Consider a point $\msp = (\alpha_0\!:\!\cdots\!:\!\alpha_n;\beta_0\!:\!\cdots\!:\!\beta_m) \in P$ and suppose for example that $\alpha_0 \ne 0, \beta_j \ne 0$ and $a_j = 1$ for some $j$.
Then for $l \ne j$ such that $\lambda_l/a_l \ge \lambda_j$, the replacement
\[
x_l \mapsto \alpha_0^{\lambda_l - a_l \lambda_j} \beta_j^{a_l} x_l - \beta_l u_0^{\lambda_l - a_l \lambda_j} x_j^{a_l}
\]
induces an automorphism of $P$.
By considering the above coordinate change, we can transform $\msp$ (via an automorphism of $P$) into a point for which the $x_l$-coordinate is zero for $l$ with $\lambda_l/a_l \ge \lambda_j$.}
\end{Rem}

We have the decomposition
\[
\Cox (P) = \bigoplus_{(\alpha,\beta) \in \mbZ^2} \Cox (P)_{(\alpha,\beta)},
\]
where $\Cox (P)_{(\alpha,\beta)}$ consists of the homogeneous elements of bi-degree $(\alpha,\beta)$.
An element $f \in \Cox (P)_{(\alpha,\beta)}$ is called a (homogeneous) polynomial of bi-degree $(\alpha,\beta)$.

The Weil divisor class group $\Cl (P)$ is naturally isomorphic to $\mbZ^2$.
Let $F$ and $D$ be the divisors on $P$ corresponding to $(1,0)$ and $(0,1)$, respectively, which generate $\Cl (P)$.
Note that $F$ is the class of the pullback of a hyperplane on $\mbP^{n}$ via $\Pi\colon P \to \mbP^{n}$.
We denote by $\mcO_P (\alpha,\beta)$ the rank $1$ reflexive sheaf corresponding to the divisor class of type $(\alpha,\beta)$, that is, the divisor $\alpha F + \beta D$.
More generally, for a subscheme $Z \subset P$, we set $\mcO_Z (\alpha,\beta) = \mcO_P (\alpha,\beta)|_Z$.
We remark that there is an isomorphism
\[
H^0 (P,\mcO_P (\alpha,\beta)) \cong \Cox (P)_{(\alpha,\beta)}.
\]


\begin{Def}{\rm
For integers $k,l,m,n$ with $n \ge 3$, we define $P_n (k,l,m)$ (resp.\ $Q_n (k,l)$) to be the $\mbP^2$-bundle (resp.\ $\mbP^1$-bundle) over $\mbP^{n-1}$ defined by the matrix
\[
\begin{pmatrix}
u_0 & \cdots & u_{n-1} & & x & y & z \\
1 & \cdots & 1 & | & k & l & m \\
0 & \cdots & 0 & | & 1 & 1 & 1
\end{pmatrix}
\quad
\left(\text{resp.}
\begin{pmatrix}
u_0 & \cdots & u_{n-1} & & x & y \\
1 & \cdots & 1 & | & k & l \\
0 & \cdots & 0 & | & 1 & 1
\end{pmatrix}
\right).
\]}
\end{Def}

\begin{Rem} \label{rem:projvb} {\rm
Let $P$ be as in Definition \ref{def:wpsbundle}.
When $a_0 = \cdots = a_m = 1$, $P$ is a $\mbP^m$-bundle over $\mbP^n$.
More precisely we have an isomorphism
\[
P \cong \mbP_{\mbP^n} (\mcO_{\mbP^n} (-\lambda_0) \oplus \cdots \oplus \mcO_{\mbP^n} (-\lambda_m)).
\]
Here, for a vector bundle $\mcE$ over $\mbP^n$, $\mbP (\mcE) =\mbP_{\mbP^n} (\mcE)$ denotes the projective bundle of one-dimensional quotients of $\mcE$.
Moreover, via the above isomorphism, the pullback of a hyperplane on $\mbP^{n-1}$ and the tautological divisor on $\mbP (\mcE)$ are identified with the divisors on $P$ corresponding to $(1,0)$ and $(0,1)$, respectively.}
\end{Rem}

\subsection{Embedded conic bundles}


In the rest of this section we work over $\mbC$.
By a {\it splitting vector bundle}, we mean a vector bundle which is a direct sum of invertible sheaves.

\begin{Def} \label{def:cb} {\rm
Let $X$ be a normal projective $\mbQ$-factorial variety of dimension $n$.
We say that a morphism $\pi \colon X \to \mbP^{n-1}$ is a {\it conic bundle} (over $\mbP^{n-1}$) if it is a Mori fibre space, that is, $X$ has only terminal singularities, $\pi$ has connected fibres, $-K_X$ is $\pi$-ample and $\rho (X) = 2$, where $\rho (X)$ denotes the rank of the Picard group.

An {\it embedded conic bundle} $\pi \colon X \to \mbP^{n-1}$ is a conic bundle such that $X$ is embedded in a projective bundle $\mbP (\mcE) :=\mbP_{\mbP^{n-1}} (\mcE)$ as a member of $|d F + 2 D|$ for some splitting vector bundle $\mcE$ of rank $3$ on $\mbP^{n-1}$ and $d \in \mbZ$, and $\pi$ coincides with the restriction of $\Pi\colon\mbP (\mcE) \to \mbP^{n-1}$ to $X$.
Here $F$ and $D$ denote the pullback of a hyperplane on $\mbP^{n-1}$ and the tautological class $D$ on $\mbP (\mcE)$, respectively.}
\end{Def}

In the following let $\mcE$ be a splitting vector bundle of rank $3$ on $\mbP^{n-1}$ and $X \in |d F + 2 D|$ be a general member.
We denote by $\pi \colon X \to \mbP^{n-1}$ the restriction of $\Pi\colon\mbP (\mcE) \to \mbP^{n-1}$ to $X$.
Without loss of generality we may assume that 
\[
\mcE \cong \mcO_{\mbP^{n-1}} (-k) \oplus \mcO_{\mbP^{n-1}} (-l) \oplus \mcO_{\mbP^{n-1}} (-m)
\] 
for some $k \le l \le m$.
Then, by Remark \ref{rem:projvb}, we have $\mbP (\mcE) \cong P_n (k,l,m)$ and the linear system $|d F + 2 D|$ on $\mbP_{\mbP^{n-1}} (\mcE)$ corresponds to $|\mcO_{P_n (k,l,m)} (d,2)|$.
Here we do not assume that $\pi \colon X \to \mbP^{n-1}$ is a conic bundle. 
We study conditions on $k,l,m$ and $d$ that make $\pi \colon X \to \mbP^{n-1}$ a conic bundle.

\begin{Lem} \label{lem:crisingcbfam}
Let $k, l, m, d$ be integers such that $k \le l \le m$.
Set $P = P_n (k,l,m)$ and let $X$ be a general member of $|\mcO_P (d,2)|$.
\begin{itemize}
\item[\rm (1)] $X$ is smooth if and only if $d \ge 2 m$, $d = l + m$, or $d = k + m$.
\item[\rm (2)] $X$ is not smooth and has only terminal singularities if and only if $2 m > d > l+m$.
\item[\rm (3)] $X$ is non-normal if and only if $k + m > d$. 
\end{itemize}
\end{Lem}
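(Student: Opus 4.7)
The plan is to write a general $F \in H^0(P, \mcO_P(d,2))$ explicitly as
\[
F = a_{xx}(u) x^2 + a_{xy}(u) xy + a_{yy}(u) y^2 + a_{xz}(u) xz + a_{yz}(u) yz + a_{zz}(u) z^2,
\]
where each $a_{ij}$ is homogeneous in $u_0,\dots,u_{n-1}$ of degree $d - w_i - w_j$ with $(w_x,w_y,w_z) = (k,l,m)$, and is treated as zero whenever this degree is negative. Because $k \le l \le m$, the presence of the monomials $z^2$, $yz$, and $xz$ in $F$ is controlled by the three thresholds $d \ge 2m$, $d \ge l+m$, and $d \ge k+m$, so the entire analysis reduces to a case division on $d$ relative to these numbers.

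First I would determine $\Bs |\mcO_P(d,2)|$ in each regime and invoke Bertini on the smooth ambient $P$ to conclude that a general $X$ is smooth off the base locus. For $d \ge 2m$ the linear system is base-point-free and $X$ is smooth; in all remaining cases the base locus is contained in the section $\Sigma_z = \{x = y = 0\} \cong \mbP^{n-1}$, which is itself contained in $X$ since every surviving monomial of $F$ involves $x$ or $y$.

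The core is then the local computation of $\mathrm{Sing}(X)$ along $\Sigma_z$. Only
\[
\partial_x F|_{\Sigma_z} = a_{xz}(u)\,z, \qquad \partial_y F|_{\Sigma_z} = a_{yz}(u)\,z
\]
survive, so (using $z \neq 0$) $\mathrm{Sing}(X) \cap \Sigma_z$ is cut out by $a_{xz} = a_{yz} = 0$. For $d = l+m$ the constant $a_{yz}$ is nonzero and $X$ is smooth; for $d = k+m$ the constant $a_{xz}$ is nonzero and $X$ is again smooth; for $l+m < d < 2m$, Bertini applied to $|a_{xz}|$ and $|a_{yz}|$ shows their common zero locus is a smooth codimension-$2$ complete intersection in $\mbP^{n-1}$ (nonempty since $n \ge 3$), producing a singular locus of dimension $n-3$ in $X$; for $k+m < d < l+m$, $a_{yz}$ is absent and the singular locus has dimension $n-2$; and for $d < k+m$ both $a_{xz}$ and $a_{yz}$ are absent, so every partial vanishes on all of $\Sigma_z$, forcing $X$ to be singular in codimension $1$ and hence non-normal.

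The hard part is verifying terminality of the singularities in case (2). For this I would Taylor expand $F$ at a singular point $p$: in affine coordinates $v = u - u_*$, $x$, $y$ with $z = 1$, the quadratic part reads
\[
Q = a_{xx}(u_*) x^2 + a_{xy}(u_*) xy + a_{yy}(u_*) y^2 + L_1(v)\, x + L_2(v)\, y,
\]
where $L_1, L_2$ denote the linear parts of $a_{xz}, a_{yz}$ at $u_*$. A short determinant computation shows that the Hessian of $Q$ has rank exactly $4$ whenever $L_1, L_2$ are linearly independent, which holds for general $F$. By the Morse lemma with parameters, the singularity is analytically isomorphic to $\{x_1^2 + x_2^2 + x_3^2 + x_4^2 = 0\} \times \mbA^{n-3}$, a compound Du Val cA$_1$ singularity, hence Gorenstein terminal in every dimension. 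The intermediate range $k+m < d < l+m$ instead gives a rank-$3$ Hessian and an $A_1$-surface-times-smooth singularity that is canonical but not terminal, consistent with this range falling outside cases (1) and (2). Finally, normality for $d \ge k+m$ follows by combining $R_1$ (the singular locus has codimension $\ge 2$ in $X$) with $S_2$ (which is automatic since $X$ is Cartier in the smooth variety $P$, hence Cohen-Macaulay).
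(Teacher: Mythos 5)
Your argument is correct and follows essentially the same route as the paper's proof: the same case division of $d$ against $2m$, $l+m$ and $k+m$, the same identification of the singular locus as $(x=y=0)$ intersected with the vanishing loci of the $xz$- and $yz$-coefficients, and the same conclusion in each range (your only real divergence is in case (2), where the paper verifies terminality by blowing up the codimension-$3$ singular locus and computing $K_{X'}=\sigma^{*}K_X+E$, while you reach the same discrepancy via the Morse normal form of an ordinary double point along a smooth centre). One shared blind spot worth flagging: both your step ``the base locus is contained in $\Sigma_z$'' and the paper's own treatment of the borderline subcases tacitly assume $d\ge 2l$, i.e.\ that the $y^2$-coefficient survives; when $d<2l$ every monomial of bidegree $(d,2)$ is divisible by $x$, the general member is reducible (hence non-normal even for $d\ge k+m$), so those subcases should be read with that implicit restriction.
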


\begin{proof}
Suppose that $d \ge 2 m$.
Then $|\mcO_P (d,2)|$ is base point free and its general member $X$ is smooth.
In the following we assume that $2 m > d \ge k + m$.

Suppose that $2 m > d > l + m$.
Then $X$ is defined in $P$ by
\[
a x^2 + b y^2 + f x y + g x z + h y z = 0,
\]
where $a,b,f,g,h \in \mbC [u]$.
We have $\deg h = d - (l+m) > 0$ and $\deg g = d - (k+m) > 0$.
Then $X$ is singular along $(x = y = g = h = 0) \ne \emptyset$.
The singular locus is of codimension $3$ in $X$.
Since $X$ is general, the hypersurfaces in $\mbP^{n-1}$ defined by $g = 0$ and $h = 0$ are both nonsingular and intersect transversally.
It is then straightforward to check that the blowup $\sigma \colon X' \to X$ along the singular locus is a resolution and we have $K_{X'} = \sigma^*K_X + E$, where $E$ is the exceptional divisor.
Thus $X$ has terminal singularities.

Suppose that $2 m > d = l + m$.
Then $X$ is defined in $P$ by
\[
a x^2 + b y^2 + f x y + g x z + y z = 0.
\]
Replacing $y$ and $z$ suitably, we can eliminate the terms $b y^2, f x y$ and $g z x$, that is, $X$ is defined by 
\[
a x^2 + y z = 0.
\]
It is then clear that $X$ is smooth, when $a$ is general.

Suppose that $l + m > d > k + m$.
Then $X$ is defined in $P$ by
\[
a x^2 + b y^2 + f x y + g x z = 0.
\]
We have $\deg g = d - (k + m) > 0$.
Then $X$ is singular along $(x = y = g = 0) \ne \emptyset$, and the singularity is not terminal since the singular locus is of codimension $2$ in $X$.

Suppose that $l + m > d = k+m$.
Then $X$ is defined in $P$ by
\[
a x^2 + b y^2 + f x y + x z = 0.
\]
Replacing $z$ suitably, we may assume that $X$ is defined by
\[
b y^2 + z x = 0.
\]
It is easy to see that $X$ is smooth.

Finally suppose that $k + m > d$.
Then $X$ is defined in $P$ by
\[
a x^2 + b y^2 + f x y = 0,
\]
where $a,b,f \in \mbC [u]$.
In this case $X$ is singular along the divisor $(x = y = 0) \subset X$.
Thus $X$ is not normal.
The above arguments prove (1), (2) and (3).
\hfill $\Box$ \end{proof}

\begin{Lem} \label{lem:embcbpic}
In the same setting as in \emph{Lemma \ref{lem:crisingcbfam}}, suppose that either $d = l + m$ or $d = k + m$.
Then the variety $X$ is rational.
Moreover we have $\rho (X) \ge 3$ unless $k = l = m$.
\end{Lem}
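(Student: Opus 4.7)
The plan is to use the normal forms produced in the proof of Lemma~\ref{lem:crisingcbfam}: after a suitable coordinate change on $P_n(k, l, m)$, one may assume that $X$ is cut out by $ax^2 + yz = 0$ when $d = l + m$, and by $by^2 + xz = 0$ when $d = k + m$, where $a, b \in \mbC[u]$. In both cases the defining equation is linear in $z$, so the projection
\[
\varphi \colon X \dashrightarrow Q_n(k, l), \qquad (u; x\!:\!y\!:\!z) \mapsto (u; x\!:\!y),
\]
is birational, with rational inverse given respectively by $(u; x\!:\!y) \mapsto (u; x\!:\!y\!:\!-ax^2/y)$ or $(u; x\!:\!y\!:\!-by^2/x)$. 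Since $Q_n(k, l)$ is a $\mbP^1$-bundle over $\mbP^{n-1}$, it is rational, and therefore so is $X$.

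\textbf{Picard rank.} I focus on the case $d = l + m$; the case $d = k + m$ is treated by an identical argument after swapping the roles of the fibre variables. Assume that $k = l = m$ does not hold; since $k \le l \le m$, this forces $\deg a = l + m - 2k > 0$, so the discriminant $\Delta := \{a = 0\} \subset \mbP^{n-1}$ is a non-empty hypersurface. Over a general point $p \in \Delta$, the fibre $\pi^{-1}(p)$ becomes the nodal conic $\{yz = 0\} \subset \mbP^2$, a union of two line components $\ell_y = \{y = 0\} \cap \pi^{-1}(p)$ and $\ell_z = \{z = 0\} \cap \pi^{-1}(p)$ meeting at the point $(1\!:\!0\!:\!0)$. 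The equation $ax^2 + yz = 0$ also exhibits two sections of $\pi$,
\[
s_1 := \{x = y = 0\} \cap X \cong \mbP^{n-1}, \qquad s_2 := \{x = z = 0\} \cap X \cong \mbP^{n-1},
\]
which are disjoint because $\{x = y = z = 0\}$ lies in the irrelevant ideal of $P_n(k, l, m)$. Inspection in the chart $z = 1$, where $X$ becomes the smooth hypersurface $y = -ax^2$, shows that $s_1 \cap \pi^{-1}(p)$ is the single point $(0\!:\!0\!:\!1)$, which lies on $\ell_y$ but not on $\ell_z$; hence $s_1 \cdot \ell_y = 1$ and $s_1 \cdot \ell_z = 0$, and so $[\ell_y]$ and $[\ell_z]$ are linearly independent in $N_1(X)$. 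Taking furthermore a curve $C \subset s_2$ projecting isomorphically onto a general line in $\mbP^{n-1}$ yields $F \cdot C = 1$ while $F \cdot \ell_y = F \cdot \ell_z = 0$, placing $[C]$ outside the span of $[\ell_y]$ and $[\ell_z]$. The three curve classes $[\ell_y], [\ell_z], [C]$ being linearly independent in $N_1(X)$ forces $\rho(X) \ge 3$.

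\textbf{Main obstacle.} The only calculation requiring any care is the transversality assertion $s_1 \cdot \ell_y = 1$ and $s_1 \cdot \ell_z = 0$ separating the two components of each degenerate fibre by the two sections; this is immediate from the affine chart $z = 1$ described above. Once these intersection numbers are in hand, the linear independence of the three curve classes $[\ell_y], [\ell_z], [C]$ and the conclusion $\rho(X) \ge 3$ follow at once.
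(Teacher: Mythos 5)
Your rationality argument is the paper's own: project to $Q_n(k,l)$ and use that the normal form is linear in $z$. One small caveat there: the normal forms $ax^2+yz=0$ and $by^2+xz=0$ are obtained in the proof of Lemma~\ref{lem:crisingcbfam} only under the strict inequalities $2m>d=l+m$ and $l+m>d=k+m$, so your appeal to ``the normal forms produced in the proof of Lemma~\ref{lem:crisingcbfam}'' does not literally cover the boundary case $2m=d=l+m$ (which forces $l=m$ and a general equation $ax^2+y^2+z^2+fxy+gxz+\alpha yz=0$); the paper performs the extra coordinate change in $(y,z)$ explicitly, and your ``suitable coordinate change'' should too. For the Picard rank in the case $d=l+m$ you take a genuinely different route and it is correct: the paper notes that the birational \emph{morphism} $X\to Q_n(k,l)$ contracts the divisor $(y=a=0)$ to a codimension-$2$ subset, whence $\rho(X)>\rho(Q_n(k,l))=2$, whereas you exhibit three numerically independent curve classes $[\ell_y],[\ell_z],[C]$ via the two disjoint sections and the class $F$. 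Your intersection numbers check out (one also needs $[\ell_z]\ne 0$, which is automatic for an effective curve on a projective variety), and your argument has the advantage of not requiring the extension of $X\ratmap Q_n(k,l)$ to a morphism.

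The genuine problem is the sentence ``the case $d=k+m$ is treated by an identical argument after swapping the roles of the fibre variables.'' The symmetry is not exact: for $by^2+xz=0$ the discriminant is $(b=0)$ with $\deg b=d-2l=k+m-2l$, and the negation of $k=l=m$ does \emph{not} force $k+m-2l>0$ (it does force $l+m-2k>0$, which is why the first case works). For example $(k,l,m)=(1,2,3)$, $d=k+m=4$ gives $\deg b=0$, so $b$ is a nonzero constant, every fibre of $\pi$ is a smooth conic, and projection from the section $(x=y=0)$ --- a point lying on each fibre conic --- gives an isomorphism $X\cong Q_n(1,2)$ over $\mbP^{n-1}$; hence $\rho(X)=2$, there are no reducible fibres for your curves, and no contracted divisor for the paper's argument either. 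Indeed the paper's own proof has the same defect: it reduces $l+m=d=k+m$ to the first case (forcing $k=l$), but for $l+m>d=k+m$ it also invokes ``the same argument as above,'' which silently needs $\deg b>0$. So in the range $k<l$ with $k+m\le 2l$ the assertion $\rho(X)\ge 3$ actually fails (or, when $k+m<2l$, the general member is reducible), and any correct treatment must record the inequality $k+m>2l$ that the swapped argument requires and handle the remaining cases separately.
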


\begin{proof}
Suppose that $d = l + m$, which implies $2 m \ge d = l + m$.
We claim that $X$ is defined by an equation of the form $a x^2 + y z = 0$, where $a \in \mbC [u]$.
This is already proved in Lemma \ref{lem:crisingcbfam}, when $2 m > d$.
Suppose that $2 m = d = l + m$.
Then $l = m$ and $X$ is defined by
\[
a x^2 + y^2 + z^2 + f x y + g x z + \alpha y z = 0,
\]   
where $\alpha \in \mbC$ and $a,f,g \in \mbC [u]$.
Replacing $y$ and $z$, the above equation can be transformed into $a x^2 + y z = 0$ and the claim is proved.

We consider the projection $X \ratmap Q := Q_n (k,l)$
Note that $Q \cong \mbP (\mcO (-k) \oplus \mcO (-l))$.
Then the projection is birational, hence $X$ is rational.
The projection $X \ratmap Q$ is defined outside $(x = y = 0) \subset X$.
Let $\msp \in (x = y = 0)$ be a point.
Then $z$ does not vanish at $\msp$ and we have
\[
y = \frac{y z}{z} = - \frac{a x^2}{z}.
\]
From this we deduce that $X \ratmap Q$ is everywhere defined.
Now we assume that either $k \ne l$ or $l \ne m$.
Then $\deg a = d - 2 k = l + m - k > 0$.
We see that $(y = a = 0) \subset X$ is a divisor and it is contracted by $X \to Q$ to a codimension $2$ subset of $Q$.
This shows $\rho (X) \ge 3$.

Next, suppose that $d = k + m$.
Note that $l + m \ge d$.
If in addition $l + m > d$, then, by the proof of Lemma \ref{lem:crisingcbfam}, the defining equation of $X$ can be written as $b y^2 + x z = 0$.
The statement follows from the same argument as above.
If $l + m = d$, then $k = l$ and we have $d = l + m$.
This case is already proved.
\hfill $\Box$ \end{proof}

\begin{Lem} \label{lem:classifnonsingcb}
In the same setting as in \emph{Lemma \ref{lem:crisingcbfam}}, $\pi \colon X \to \mbP^{n-1}$ is a nonsingular conic bundle if and only if one of the following holds:
\begin{itemize}
\item[\rm (1)] $d > 2 m$, 
\item[\rm (2)] $d = 2 m$ and $m > l$, or 
\item[\rm (3)] $d = 2 m = 2 l = 2 k$.
\end{itemize}
\end{Lem}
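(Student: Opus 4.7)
The ``only if'' direction reduces to combining Lemma~\ref{lem:crisingcbfam}(1) and Lemma~\ref{lem:embcbpic}: smoothness of $X$ requires $d\ge 2m$, $d=l+m$, or $d=k+m$, and Lemma~\ref{lem:embcbpic} rules out the last two unless $k=l=m$, since they force $\rho(X)\ge 3$ incompatible with the Mori fibre space condition $\rho(X)=2$. Partitioning the residual $d\ge 2m$ into $d>2m$, $d=2m$ with $k=l=m$, $d=2m$ with $m>l$, and $d=2m$ with $l=m>k$---the last case coinciding with $d=l+m$ under $k\ne m$ and again excluded by Lemma~\ref{lem:embcbpic}---leaves precisely (1), (2), (3).

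For the ``if'' direction, in each of the three cases I would verify the four conic bundle axioms. Smoothness is Lemma~\ref{lem:crisingcbfam}(1). From $K_P=-(n+k+l+m)F-3D$, adjunction on $P=P_n(k,l,m)$ gives $-K_X=(n+k+l+m-d)F|_X+D|_X$; since $D$ restricts to $\mcO_{\mbP^2}(1)$ on every $\mbP^2$-fibre of $\Pi$, the intersection $-K_X\cdot\ell=D\cdot\ell>0$ on any curve $\ell$ in a fibre of $\pi$ establishes $\pi$-ampleness. Connectedness of fibres is automatic, each being a conic in $\mbP^2$. The remaining axiom $\rho(X)=2$ is the substantive content.

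The Mori cone of $P$ is spanned by a fibre of $\Pi$ and by a curve inside $(x=y=0)\subset P$ mapping to a line in $\mbP^{n-1}$, so the nef cone is spanned by $F$ and $D+mF$, with interior equal to the ample cone. In case~(1), $d>2m$ places $L:=dF+2D$ inside the ample cone; since $X$ is a smooth ample divisor of dimension $n\ge 3$ in smooth $P$, the Grothendieck--Lefschetz theorem gives $\Pic(P)\xrightarrow{\sim}\Pic(X)$ and thus $\rho(X)=2$. In case~(3), the identification $P\cong\mbP^{n-1}\times\mbP^2$ (via $\mcE\otimes\mcO(k)\cong\mcO^{\oplus 3}$) sends $L$ to $2D_{\mathrm{std}}$, realising $X$ as $\mbP^{n-1}\times C$ for a smooth conic $C$, so $\rho(X)=2$ is immediate.

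\textbf{The main obstacle is case~(2)}, where $L=2(mF+D)$ is big and nef but lies on the boundary of the ample cone, so Grothendieck--Lefschetz does not apply verbatim. The bundle $\mcE\otimes\mcO(m)\cong\mcO(m-k)\oplus\mcO(m-l)\oplus\mcO$ is globally generated (using $m>l\ge k$), so $|mF+D|$ and hence $|L|$ is base-point-free, and $L$ is semi-ample and big. A Lefschetz-type theorem for big-and-nef semi-ample divisors on smooth projective varieties, applied to the smooth divisor $X\in|L|$ of dimension $n\ge 3$, yields a Hodge-compatible $\mbQ$-isomorphism $H^2(P,\mbQ)\xrightarrow{\sim}H^2(X,\mbQ)$; intersecting with the $(1,1)$-part gives $\Pic(P)\otimes\mbQ\cong\Pic(X)\otimes\mbQ$, whence $\rho(X)=\rho(P)=2$. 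Alternatively, for very general $X$, one may complete the square in $z$ (using that the $z^2$-coefficient is a nonzero constant for general $X$) to exhibit $X$ as a smooth double cover of $Q_n(k,l)$ branched along a very general divisor, and invoke a Noether--Lefschetz-style statement for double covers in dimension $\ge 3$ to conclude $\rho(X)=\rho(Q_n(k,l))=2$.
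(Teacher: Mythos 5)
Your ``only if'' direction is essentially the paper's entire argument: the published proof of this lemma consists of the single sentence ``This follows from Lemmas \ref{lem:crisingcbfam} and \ref{lem:embcbpic}'', and your case split ($d\ge 2m$ forced by smoothness, the residual cases $d=l+m$ and $d=k+m$ with $d<2m$, as well as $d=2m$ with $l=m>k$, eliminated because $\rho(X)\ge 3$ unless $k=l=m$) is exactly the intended reading. Your ``if'' direction supplies verifications the paper leaves implicit ($\pi$-ampleness of $-K_X$, connectedness of fibres, and above all $\rho(X)=2$); cases (1) and (3) are handled correctly, since $\dim P=n+1\ge 4$ makes Grothendieck--Lefschetz applicable to the ample divisor $X$, and type (3) is literally $\mbP^{n-1}\times C$.

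The one genuine weak point is the first argument you offer for case (2). There is no ``Lefschetz-type theorem for big-and-nef semi-ample divisors'' at the level of generality you invoke: take $P$ the blow-up of $\mbP^4$ along a curve $C$ of degree at least $2$ and $L$ the pullback of $\mcO_{\mbP^4}(1)$; then $L$ is big, nef and semi-ample, a general $X\in |L|$ is smooth of dimension $3$, but $H^2(P,\mbQ)\to H^2(X,\mbQ)$ fails to be surjective, since $X$ is $\mbP^3$ blown up at $\deg C$ points while $\rho(P)=2$. What rescues your situation is a geometric fact you do not exploit: because $d=2m$ makes the $z^2$-coefficient a nonzero constant, $X$ is disjoint from the section $\Gamma=(x=y=0)$, which is exactly the locus contracted (to a single point) by $|mF+D|$; this disjointness, not big-and-nefness alone, is what makes a Lefschetz argument go through. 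The cleaner repair is your own second suggestion: after completing the square in $z$, the branch divisor of the double cover $X\to Q_n(k,l)$ has class $2(mF+D)$ on $Q_n(k,l)$, and this class is ample precisely because $m>l$ --- this is where hypothesis (2) enters. The Lefschetz theorem for cyclic covers branched along smooth ample divisors then gives $H^2(X,\mbZ)\cong H^2(Q_n(k,l),\mbZ)$ once $n\ge 3$, hence $\rho(X)=2$ for every such smooth $X$; no Noether--Lefschetz or very-generality statement is needed in dimension $\ge 3$. With that repair your proof is complete and considerably more detailed than the paper's.
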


\begin{proof}
This follows from Lemmas \ref{lem:crisingcbfam} and \ref{lem:embcbpic}.
\hfill $\Box$ \end{proof}

\begin{Prop} \label{prop:ratsingfam}
Let $X$ be an embedded conic bundle over $\mbP^{n-1}$.
If $X$ is general $($in the linear system$)$ and singular, then $X$ is rational.
\end{Prop}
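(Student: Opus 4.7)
The plan is to pin down the range of parameters $(k,l,m,d)$ for which a general $X$ is singular and yet defines an embedded conic bundle, and then to exhibit an explicit birational map from $X$ to a $\mbP^1$-bundle over $\mbP^{n-1}$.

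First, combining Lemmas \ref{lem:crisingcbfam} and \ref{lem:embcbpic}, I would argue that a singular embedded conic bundle $X$ must satisfy $2m > d > l + m$. Indeed, an embedded conic bundle is by definition a Mori fibre space, so $X$ is normal $\mbQ$-factorial with terminal singularities and $\rho(X) = 2$. By Lemma \ref{lem:crisingcbfam}, normality excludes $k + m > d$; terminality in the singular case excludes $l + m > d > k + m$; and outside of $2m > d > l + m$ the remaining possibilities $d \ge 2m$, $d = l+m$, $d = k + m$ produce either a smooth $X$ or a variety with $\rho(X) \ge 3$ by Lemma \ref{lem:embcbpic} (in the exceptional subcase $k = l = m$ the variety is again smooth by case (3) of Lemma \ref{lem:classifnonsingcb}).

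With $2m > d > l + m$ in hand, the variety $X \subset P_n (k,l,m)$ is defined by an equation
$$
a x^2 + b y^2 + f x y + g x z + h y z = 0,
$$
with $\deg g = d - (k+m) > 0$ and $\deg h = d - (l+m) > 0$, so $g$ and $h$ are both nonzero. I would then consider the projection $\varphi \colon X \dashrightarrow Q := Q_n (k,l)$ that forgets the coordinate $z$. Since the defining equation is linear in $z$, we solve
$$
z = -\frac{a x^2 + b y^2 + f x y}{g x + h y},
$$
which for general $a,b,f,g,h$ defines a rational map $Q \dashrightarrow X$ inverse to $\varphi$. Hence $\varphi$ is birational.

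Finally, by Remark \ref{rem:projvb} the $\mbP^1$-bundle $Q \cong \mbP_{\mbP^{n-1}} (\mcO_{\mbP^{n-1}}(-k) \oplus \mcO_{\mbP^{n-1}}(-l))$ is rational, and therefore so is $X$. The only nontrivial step is the parameter bookkeeping at the start; once the form of the equation is fixed, the elimination of $z$ makes the birational equivalence transparent.
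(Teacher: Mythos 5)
Your proposal is correct and takes essentially the same route as the paper: one restricts the parameters (the paper extracts only $2m > d \ge k+m$ from normality, which already suffices, while you additionally use terminality to narrow to $2m > d > l+m$), and then observes that the defining equation is linear in $z$ with nonzero coefficient $g x + h y$, so the projection forgetting $z$ is birational onto the rational $\mbP^1$-bundle $Q_n(k,l)$.
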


\begin{proof}
We may assume that $X \in |\mcO_P (d,2)|$, where $P = P_n (k,l,m)$, for some $k \le l \le m$.
By Lemma \ref{lem:crisingcbfam}, we have $2 m > d \ge k + m$.
Then a general member $X$ is defined by an equation of the form
\[
a x^2 + b y^2  + f x y + g x z + h y z = 0,
\]
where $a,b,f,g,h \in \mbC [u]$.
Here, note that, if for example $l + m > d$, then we know that the term $h y z$ does not appear in the equation.
The inequality $d \ge k +m$ implies that $g \ne 0$ since $X$ is general.
Let $P \ratmap Q = Q_n (k,l)$ be the natural projection.
Now we can write the defining equation as
\[
z (g x + h y) + a x^2 + b y^2 + f x y = 0,
\]
which implies that the restriction $X \ratmap Q$ is birational.
Therefore $X$ is rational.
\hfill $\Box$ \end{proof}

The following can be considered as a ``normal form'' of conic bundles, which describes nonsingular embedded conic bundles (see Proposition \ref{prop:nonsingcbfam}).

\begin{Def}{\rm
Let $(\lambda,\mu,\nu)$ be a triplet of integers $\lambda,\mu,\nu$.
We say that $\pi \colon X \to \mbP^{n-1}$ (or $X$) is of type $[\lambda,\mu,\nu]$ if $X$ belongs to $|\mcO_P (\lambda+\mu+\nu,2)|$, where $P = P_n (\lambda,\mu,\nu)$, and $\pi$ coincides with the restriction of $P \to \mbP^{n-1}$ to $X$.}
\end{Def}

\begin{Prop} \label{prop:nonsingcbfam}
Let $\pi \colon X \to \mbP^{n-1}$ be a nonsingular embedded conic bundle.
Then $X$ is either of type $[\lambda,\mu,\nu]$ for some $\lambda,\mu,\nu$ such that $0 < \lambda \le \mu \le \nu \le \lambda + \mu$ or of type $[0,0,0]$.
\end{Prop}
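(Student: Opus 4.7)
The plan is to reduce to Lemma \ref{lem:classifnonsingcb}, which lists exactly three cases for a nonsingular embedded conic bundle: after writing $X \in |\mcO_P(d,2)|$ with $P = P_n(k,l,m)$ and $k \le l \le m$, we have either (i) $d > 2m$, (ii) $d = 2m$ and $m > l$, or (iii) $d = 2m = 2l = 2k$. The remaining task is to exhibit a different, isomorphic realization of the ambient bundle $P$ which makes $X$ of the prescribed type.

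The mechanism is the standard isomorphism $\mbP(\mcE) \cong \mbP(\mcE \otimes \mcL)$ for any line bundle $\mcL$ on $\mbP^{n-1}$. Taking $\mcL = \mcO_{\mbP^{n-1}}(a)$, the Cox description gives $P_n(k,l,m) \cong P_n(k-a,l-a,m-a)$, and under this isomorphism the tautological class transforms as $D \mapsto D + aF$ (this is the standard calculation from the surjection $\pi^*\mcE \to \mcO_{\mbP(\mcE)}(1)$). Hence a divisor of bi-degree $(d,2)$ becomes of bi-degree $(d-2a,2)$. To match the defining condition of type $[\lambda,\mu,\nu]$, namely $d' = \lambda+\mu+\nu$, we solve $d - 2a = (k-a)+(l-a)+(m-a)$, giving the unique choice $a = k+l+m-d$, and hence
\[
(\lambda,\mu,\nu) = (d-l-m,\; d-k-m,\; d-k-l).
\]

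The rest is elementary verification. Since $k \le l \le m$, the inequalities $\lambda \le \mu \le \nu$ follow immediately from $\mu-\lambda = l-k$ and $\nu-\mu = m-l$. In case (i), $\lambda = d-l-m \ge d-2m > 0$; in case (ii), $\lambda = m-l > 0$. The upper bound $\nu \le \lambda+\mu$ unfolds as $\lambda+\mu-\nu = d-2m$, which is $\ge 0$ in case (i) and $= 0$ in case (ii). In case (iii) all three numbers vanish, giving type $[0,0,0]$.

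The only step that requires genuine care is the transformation rule for the tautological class under the twist $\mcE \rightsquigarrow \mcE \otimes \mcL$; once that sign convention is pinned down, the remaining arithmetic is routine and the classification falls into place exactly as stated.
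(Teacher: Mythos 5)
Your proof is correct and follows essentially the same route as the paper: both normalise the ambient bundle by the shift $P_n(k,l,m)\cong P_n(k-a,l-a,m-a)$ with $a=k+l+m-d$, arrive at $(\lambda,\mu,\nu)=(d-l-m,\;d-k-m,\;d-k-l)$, and then read the inequalities off Lemma~\ref{lem:classifnonsingcb}. The only cosmetic difference is that the paper extracts the same triple as the degrees of the coefficients $h,g,f$ in the defining equation via a weight comparison, whereas you solve for the twist $a$ directly.
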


\begin{proof}
We may assume that $X$ belongs to $|\mcO_{P_n (k,l,m)} (d,2)|$ for some $k \le l \le m$ and $d$.
Since the family $X$ is non-singular, we have $d \ge 2 m$ by Lemma \ref{lem:classifnonsingcb} and $X$ is defined in $P_n (k,l,m)$ by an equation of the form 
\[
a x^2 + b y^2 + c z^2 + f x y + g x z + h y z = 0,
\]
where $a,b,c,f,g,h \in \mbC [u]$. 
We set $\alpha = \deg a, \beta = \deg b, \gamma = \deg c, \lambda = \deg h, \mu = \deg g$ and $\nu = \deg f$.
By comparing the weights, we have 
\[
\alpha + 2 k = \beta + 2 l = \gamma + 2 m = \nu + k + l = \mu + k + m = \lambda + l + m.
\]
Now we have
\[
P_n (k,l,m) \cong P_n (k + (\nu - m), l + (\nu - m), m + (\nu - m)) \cong P_n (\lambda,\mu,\nu) =: P,
\]
and the linear system $|\mcO_{P_n (k,l,m)} (d,2)|$ is identified with $|\mcO_P (\lambda + \mu + \nu,2)|$.
Thus $X$ is of type $[\lambda,\mu,\nu]$.
By applying Lemma \ref{lem:classifnonsingcb} for $k = \lambda, l = \mu, m = \nu$ and $d = \lambda + \mu + \nu$, we get the desired result.
\hfill $\Box$ \end{proof}

\begin{Rem}{\rm
In the language of \cite[Definition 3.1]{ABBP}, a conic bundle $\pi \colon X \to \mbP^{n-1}$ of type $[\lambda,\mu,\nu]$ with $\lambda \le \mu \le \nu \le \lambda + \mu$ is a conic bundle of graded-free type over $\mbP^{n-1}$ corresponding to the triplet $(-\lambda + \mu + \nu,\lambda - \mu + \nu, \lambda + \mu - \nu)$. }
\end{Rem}

\section{Stable non-rationality}\label{stable-rationality}

In this section we study stable (non-)rationality of nonsingular embedded conic bundles $\pi \colon X \to \mbP^{n-1}$.
By Proposition \ref{prop:nonsingcbfam}, such a conic bundle is of type $[\lambda,\mu,\nu]$, where either $0 < \lambda \le \mu \le \nu \le \lambda + \mu$ or $\lambda = \mu = \nu = 0$.
In case $X$ is of type $[0,0,0]$, then $X \cong \mbP^{n-1} \times \mbP^1$ and it is obviously rational.
We consider the remaining cases and thus we assume that 
\[
0 < \lambda \le \mu \le \nu \le \lambda + \mu 
\]
throughout this section.
In addition we assume $\nu \ge 3$ throughout.

We set $P = P_n (\lambda,\mu,\nu)$, $\delta = \lambda + \mu + \nu$, and consider special members $X \in |\mcO_P (\delta,2)|$ defined in $P$ by an equation of the form
\begin{equation} \label{defeq}
a x^2 + b y^2 + c z^2 + f x y = 0,
\end{equation}
where $a,b,c,f$ are general polynomials in variables $u_0,\dots,u_{n-1}$.
Recall that $\nu = \deg f$ and $\deg a = - \lambda + \mu + \nu$, $\deg b = \lambda - \mu + \nu$ and $\deg c = \lambda + \mu - \nu$.

\begin{Rem} \label{rem:num} {\rm
By the assumptions on $\lambda,\mu,\nu$, we have $\deg a = - \lambda + \mu + \nu \ge 3$, $\deg b = \lambda - \mu + \nu \ge 1$, $\deg c = \lambda + \mu - \nu \ge 0$ and $\deg f = \nu \ge 3$.}
\end{Rem}

\begin{Lem} \label{lem:nonsingchar0}
If the ground field is an algebraically closed field of characteristic $0$, then $X$ is smooth.
\end{Lem}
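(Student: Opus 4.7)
The plan is to recognise the family in \eqref{defeq} as a linear subsystem
\[
\Lambda \subset |\mcO_P(\delta,2)|
\]
of the full linear system on the smooth projective variety $P = P_n(\lambda,\mu,\nu)$, verify that $\Lambda$ is base-point free, and then invoke Bertini's theorem in characteristic zero to conclude that a general member is smooth. Here $\Lambda$ consists of the divisors cut out by $a x^2 + b y^2 + c z^2 + f x y$ as $(a,b,c,f)$ ranges over polynomials in $u_0,\dots,u_{n-1}$ of the prescribed degrees; the ambient variety $P$ is a $\mbP^2$-bundle over $\mbP^{n-1}$ and is therefore smooth, so Bertini applies in its cleanest form.

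The main step is checking base-point freeness of $\Lambda$. Take a point
\[
\msp = (u_0\!:\!\cdots\!:\!u_{n-1};\, x_0\!:\!y_0\!:\!z_0) \in P.
\]
Since $(x_0,y_0,z_0) \ne (0,0,0)$, at least one of $x_0^2$, $y_0^2$, $z_0^2$ is nonzero. If, say, $x_0 \ne 0$, Remark \ref{rem:num} guarantees $\deg a = -\lambda + \mu + \nu \ge 3$, so we can pick $a \in \mbC[u_0,\dots,u_{n-1}]$ of this degree with $a(u_0,\dots,u_{n-1}) \ne 0$; setting $b = c = f = 0$ yields a section of $\Lambda$ not vanishing at $\msp$. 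The cases $y_0 \ne 0$ and $z_0 \ne 0$ are handled identically, using $\deg b \ge 1$ and $\deg c \ge 0$ from Remark \ref{rem:num}; the boundary case $\deg c = 0$ (that is, $\nu = \lambda + \mu$) is even easier, because then $c$ is simply a nonzero constant. Thus the base locus of $\Lambda$ on $P$ is empty.

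Now Bertini's theorem in characteristic zero, applied to the base-point-free linear system $\Lambda$ on the smooth projective variety $P$, asserts that its general member is smooth. A general choice of $(a,b,c,f)$ produces such a general member, so $X$ is smooth, which is what we wanted to prove.

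I do not anticipate any substantive obstacle: the lemma is really an application of Bertini, and all the work is to check that the specific four-term shape in \eqref{defeq}, rather than the full quadratic form $a x^2 + b y^2 + c z^2 + f xy + g x z + h y z$, is still rich enough to separate base directions from fibre directions. This is exactly what Remark \ref{rem:num} ensures. The only mildly delicate point to keep in mind is that in positive characteristic Bertini can fail, so the characteristic-zero hypothesis is genuinely used; this is in line with the fact that the authors will later need a separate argument in characteristic $p$.
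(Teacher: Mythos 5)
Your proposal is correct and follows exactly the paper's own argument: the authors likewise observe that $X$ is a general member of a base-point-free sub-linear system of $|\mcO_P(\delta,2)|$ on the smooth variety $P$ and invoke Bertini in characteristic zero. The only difference is that you spell out the base-point-freeness check (which the paper asserts without detail), and your verification via Remark \ref{rem:num} is accurate.
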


\begin{proof}
The variety $X$ is a general member of the base point free sub linear system of $|\mcO_P (\delta,2)|$ on the smooth variety $P$.
Thus, by the Bertini theorem, a general $X$ is smooth.
\hfill $\Box$ \end{proof}

We use universal $\CH_0$-triviality to test stable rationality of varieties.

\begin{Def} {\rm
Let $V$ be a projective variety defined over a field $k$.
We denote by $\CH_0 (V)$ the {\it Chow group of $0$-cycles} on $V$.
We say that $V$ is {\it universally $\CH_0$-trivial} if for any field $F$ containing $k$, the degree map $\CH_0 (V_F) \to \mbZ$ is an isomorphism.
A projective morphism $\varphi \colon W \to V$ defined over $k$ is {\it universally $\CH_0$-trivial} if for any field containing $k$, the push-forward map $\varphi_* \colon \CH_0 (W_F) \to \CH_0 (V_F)$ is an isomorphism.}
\end{Def}

In the rest of this section we work over an algebraically closed field $\K$ of characteristic $2$.
Let $R$ be the $\mbP (1,1,2)$-bundle over $\mbP^{n-1}$ defined by
\[
\begin{pmatrix}
u_0 & u_1 & \cdots & u_{n-1} & & x & y & \bar{z} \\
1 & 1 & \cdots & 1 & | & \lambda & \mu & 2 \nu \\
0 & 0 & \cdots & 0 & | & 1 & 1 & 2
\end{pmatrix}
\]
and let $Z \subset R$ be the hypersurface defined by
\[
a x^2 + b y^2 + c \bar{z} + f x y = 0.
\]
We have a natural morphism $P \to R$ which is a (purely inseparable) double cover branched along $(\bar{z} = 0) \subset R$.
The image of $X$ under $P \to R$ is the hypersurface $Z \subset R$. Let $\tau \colon X \to Z$ be the induced morphism, which is a double cover branched along the divisor cut out on $Z$ by $\bar{z} = 0$.
We set $\mcL = \mcO_Z (\nu,1)$.
Then $\bar{z}$ is a global section of $\mcL^2$, and over the non-singular locus of $Z$, $\tau$ is the double cover obtained by taking the roots of $\bar{z} \in H^0 (Z,\mcL^2)$ in the sense of \cite[Construction 8]{Kollar1}.

In Sections \ref{sec:sing} and \ref{sec:crit} below we will analyse the singularities of $X$ and $Z$, and finally we will show the existence of a universally $\CH_0$-trivial resolution $\varphi \colon Y \to X$ such that $H^0 (Y,\Omega_Y^{n-1}) \ne 0$ under some conditions on $\lambda,\mu,\nu$.
The latter implies that $Y$ is not universally $\CH_0$-trivial by \cite[Lemma 2.2]{Totaro}.

\subsection{Singularities} \label{sec:sing}

Recall that the ground field $\K$ is an algebraically closed field of characteristic $2$ and $X$ is a hypersurface in $P = P_n (\lambda,\mu,\nu)$ defined by
\[
a x^2 + b y^2 + c z^2 + f x y = 0
\]
for general $a,b,c,f \in \K [u_0,\dots,u_{n-1}]$.
Similarly $Z$ is the hypersurface in $R$ defined by
\[
a x^2 + b y^2 + c \bar{z} + f x y = 0.
\]
We set 
\[
\Xi = (x = y = 0) \subset R, \ 
\Xi_Z = \Xi \cap Z = (x = y = c = 0),
\]
and $R^{\circ} = R \setminus \Xi$, $Z^{\circ} = Z \setminus \Xi_Z$.

In order to analyze singularities of $Z^{\circ} \subset R^{\circ}$, we consider standard affine charts of $R^{\circ}$.
For $i = 0,\dots,n-1$ and a coordinate $w \in \{x,y\}$, we set $U_{u_i,w} = (u_i \ne 0) \cap (w \ne 0) \subset R^{\circ}$.
We have
\[
R^{\circ} = \bigcup_{i \in \{0,\dots,n-1\}, w \in \{x,y\}} U_{u_i,w}.
\]
We remark that $U_{u_i,w}$ is an affine $(n+1)$-space and that the restriction of the sections 
\[
\{u_0,\dots,u_{n-1}, x,y,\bar{z}\} \setminus \{u_i,w\}
\] 
are affine coordinates of $U_{u_i,w}$.
We only treat $U_{u_0,x}$ because the other open subsets can be understood by symmetry.
We set
\[
\tilde{u}_i = u_i/u_0, \ 
\tilde{y} = y/x u_0^{\mu - \lambda}, \ 
\tilde{z} = \bar{z}/x^2 u_0^{\nu - 2 \lambda}.
\]
Then $U_{u_0,w}$ is an affine $(n+1)$-space with affine coordinates $\tilde{u}_1,\dots,\tilde{u}_{n-1},\tilde{y},\tilde{z}$.
By a slight abuse of notation, the affine coordinates $\tilde{u}_1,\dots,\tilde{u}_{n-1},\tilde{y},\tilde{z}$ are simply denoted by $u_1,\dots,u_{n-1},y,\bar{z}$.

\begin{Lem} \label{lem:smZ}
$Z^{\circ}$ is smooth.
\end{Lem}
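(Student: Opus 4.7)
The plan is to verify smoothness via a Jacobian-based dimension count on the parameter space $\mathcal{P}$ of tuples $(a,b,c,f)$. By the evident symmetries (permutations of $u_0,\ldots,u_{n-1}$ and the involution exchanging $(x,a)$ with $(y,b)$), it suffices to check smoothness of $Z$ on the single chart $U_{u_0,x}$. Dehomogenising with $u_0 = x = 1$, the equation of $Z$ becomes
\[
F(u_1,\ldots,u_{n-1},y,\bar{z}) \;=\; \bar{a}(u) + \bar{b}(u)y^2 + \bar{c}(u)\bar{z} + \bar{f}(u)y,
\]
with the bars denoting restriction to $u_0 = 1$.

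Next I compute the Jacobian. In characteristic $2$ the partials collapse to $F_y = \bar{f}(u)$, $F_{\bar{z}} = \bar{c}(u)$, and $F_{u_i} = \bar{a}_{u_i} + \bar{b}_{u_i} y^2 + \bar{c}_{u_i} \bar{z} + \bar{f}_{u_i} y$. Thus a singular point $p = (u,y,\bar{z})$ of $Z \cap U_{u_0,x}$ is cut out by the vanishing of $F$, $F_y$, $F_{\bar{z}}$, and the $n-1$ partials $F_{u_i}$: a system of $n+2$ conditions, each linear in the coefficients of $(a,b,c,f)$ with $p$-dependent coefficients.

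The crux --- and the step I expect to be the main obstacle --- is a dimension count on the incidence variety $\mathcal{S} \subset \mathcal{P} \times U_{u_0,x}$, hinging on showing that the $n+2$ linear conditions are linearly independent on $\mathcal{P}$ for every $p$. This falls out once we notice the block structure: $F_y$ and $F_{\bar{z}}$ involve only coefficients of $f$ and $c$ respectively (and are nonzero functionals on those subspaces because the constant monomial always contributes); meanwhile $F$ and the $F_{u_i}$ have $a$-components $\bar{a}(u)$ and $\bar{a}_{u_i}(u)$, and these $n$ ``$1$-jet'' functionals are independent on $\mathcal{P}_a$ for any $u$ (testable on the monomials $u_0^{\deg a}, u_0^{\deg a - 1}u_1,\ldots,u_0^{\deg a - 1}u_{n-1}$, for which the evaluation matrix is upper triangular with ones on the diagonal). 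The block structure then upgrades this partial independence to full independence of the $n+2$ forms on $\mathcal{P}$. Hence each fibre of $\mathcal{S} \to U_{u_0,x}$ has codimension exactly $n+2$ in $\mathcal{P}$, giving $\dim \mathcal{S} = (n+1) + \dim \mathcal{P} - (n+2) = \dim \mathcal{P} - 1$. The projection $\mathcal{S} \to \mathcal{P}$ is therefore not surjective, so a general $(a,b,c,f)$ yields $Z \cap U_{u_0,x}$ smooth; the symmetric argument in the other charts then concludes that $Z^{\circ}$ is smooth.
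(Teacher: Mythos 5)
Your proposal is correct and follows essentially the same route as the paper: both arguments show that singularity of $Z^{\circ}$ at a given point imposes $n+2$ independent linear conditions on $(a,b,c,f)$ and conclude by the incidence-variety dimension count against $\dim R^{\circ}=n+1$. Your block-structure verification of independence (testing the $a$-jet functionals on the monomials $u_0^{\deg a},u_0^{\deg a-1}u_i$) just makes explicit what the paper summarizes as ``$\deg a>0$ and $\deg c,\deg f\ge 0$,'' and it also absorbs the $\deg c=0$ case that the paper treats separately.
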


\begin{proof}
If $\deg c = 0$, then $c$ is a non-zero constant and thus $\Xi_Z = \emptyset$.
In this case $Z = Z^{\circ}$ is a $\mbP^1$ bundle over $\mbP^{n-1}$ and it is smooth.

In the following we assume that $\deg c > 0$ and set 
\[
U_x = (x \ne 0), \ 
U_y = (y \ne 0) \subset R,
\]
so that $R^{\circ} = U_x \cup U_y$.
We will show that for any point $\msq \in R^{\circ}$, the condition that $Z^{\circ}$ is singular at $\msq \in Z$ imposes $n+2$ independent conditions on $a,b,c,f$. 
Then the assertion will follow by a dimension count argument since $\dim R^{\circ} = n+1$.
We note that $\deg b = \lambda - \mu + \nu \ge 1$, $\deg c = \lambda + \mu - \nu \ge \lambda \ge 3$ and $\deg f = \lambda \ge 3$ by Remark \ref{rem:num}.

Let $\msq \in U_x$.
Replacing coordinates, we may assume $\msq = (1\!:\!0\!:\!\cdots\!:\!0 ; 1\!:\!0\!:\!0)$.
Then $U_{u_0,x} \subset Q$ is an affine space with coordinates $u_1,\dots,u_{n-1},y,\bar{z}$ and $Z \cap U_{u_0,z}$ is defined by
\[
\tilde{a} + \tilde{b} y^2 + \tilde{c} \bar{z} + \tilde{f} y = 0,
\]
where we set $\tilde{h} = h (1,u_1,\dots,u_{n-1})$ for a polynomial $h (u_0,\dots,u_{n-1})$.
Note that $\msq$ corresponds to the origin.
The variety $Z^{\circ}$ is singular at $\msq$ if and only if $\tilde{a}, \tilde{c}, \tilde{f}$ vanish at $\msq$ and the linear part of $\tilde{a}$ is zero. 
This imposes $n+2$ independent conditions since $\deg a > 0$ and $\deg c, \deg f \ge 0$ (cf.\ Remark \ref{rem:num}).

Suppose that $\msq \in U_y$.
Replacing coordinates, we may assume $\msq = (1\!:\!0\!:\!\cdots\!:\!0 ; 0\!:\!1\!:\!0)$.
Then $U_{u_0,y} \subset Q$ is an affine space with coordinates $u_0,\dots,u_{n-1},x,\bar{z}$ and $Z \cap U_{u_0,y}$ is defined by
\[
\tilde{a} x^2 + \tilde{b} + \tilde{c} \bar{z} + \tilde{f} x = 0.
\]
The variety $Z^{\circ}$ is singular at $\msq$ if and only if $\tilde{b}, \tilde{c}, \tilde{f}$ vanish at $\msq$ and the linear part of $\tilde{b}$ is zero. 
The latter imposes $n+2$ independent conditions since $\deg b > 0$ and $\deg c, \deg f \ge 0$ (cf.\ Remark \ref{rem:num}), and the proof is complete.
\hfill $\Box$ \end{proof}

We set $X^{\circ} = \pi^{-1} (Z^{\circ})$.

\begin{Lem} \label{lem:smX}
$X$ is smooth along $X \setminus X^{\circ}$.
\end{Lem}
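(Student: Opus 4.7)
The first step is to pin down what $X\setminus X^{\circ}$ actually is. Since $X^{\circ}=\pi^{-1}(Z^{\circ})$ and $\tau\colon X\to Z$ is surjective, $X\setminus X^{\circ}=\tau^{-1}(\Xi_{Z})$, where $\Xi_{Z}=(x=y=c=0)$. Inside $P=P_{n}(\lambda,\mu,\nu)$ the locus $(x=y=0)$ is a section of the projection to $\mbP^{n-1}$ (because $z$ cannot vanish there), and along that section the defining equation $ax^{2}+by^{2}+cz^{2}+fxy=0$ collapses to $cz^{2}=0$. Hence
\[
X\setminus X^{\circ}=X\cap(x=y=0)=(x=y=0)\cap(c=0),
\]
which is a copy of the hypersurface $(c=0)\subset\mbP^{n-1}$ sitting in that section.

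If $\deg c=0$, i.e.\ $\nu=\lambda+\mu$, then for general $c$ this polynomial is a nonzero constant, so $X\setminus X^{\circ}=\emptyset$ and the statement is vacuous. Assume from now on $\deg c=\lambda+\mu-\nu>0$.

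The main computation is local. Pick $\msq\in X\setminus X^{\circ}$; after an automorphism of $P$ we may take $\msq=(1\!:\!0\!:\!\cdots\!:\!0;0\!:\!0\!:\!1)$, which forces $c(1,0,\ldots,0)=0$. In the affine chart $u_{0}\neq 0$, $z\neq 0$, local coordinates on $P$ are
\[
\tilde u_{i}=u_{i}/u_{0},\qquad \tilde x=u_{0}^{\nu-\lambda}x/z,\qquad \tilde y=u_{0}^{\nu-\mu}y/z,
\]
and after dehomogenising, $X$ is cut out by
\[
\tilde a\,\tilde x^{2}+\tilde b\,\tilde y^{2}+\tilde c+\tilde f\,\tilde x\tilde y=0,
\]
with $\tilde h(\tilde u_{1},\ldots,\tilde u_{n-1})=h(1,\tilde u_{1},\ldots,\tilde u_{n-1})$, and $\msq$ is the origin. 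In characteristic $2$ the partial derivatives with respect to $\tilde x$ and $\tilde y$ are $\tilde f\tilde y$ and $\tilde f\tilde x$ respectively, both vanishing at the origin. Hence smoothness of $X$ at $\msq$ is equivalent to some $\partial\tilde c/\partial\tilde u_{i}$ being nonzero at the origin, i.e.\ to the hypersurface $(c=0)\subset\mbP^{n-1}$ being smooth at the image of $\msq$.

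Finally, since $\deg c>0$, the complete linear system $|\mcO_{\mbP^{n-1}}(\deg c)|$ is base-point free, so by the Bertini theorem (valid in any characteristic for base-point-free systems) a general $c$ defines a smooth hypersurface in $\mbP^{n-1}$. Since $c$ is general by hypothesis, the required smoothness of $(c=0)$ holds at every point of $X\setminus X^{\circ}$, and the lemma follows. The only real delicate point is the vanishing of the $\tilde x$- and $\tilde y$-derivatives in characteristic $2$, which is precisely what makes the reduction to the smoothness of $(c=0)$ go through; the rest is bookkeeping in the chart.
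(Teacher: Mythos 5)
Your argument is correct and is essentially the paper's proof with the details written out: the paper likewise identifies $X\setminus X^{\circ}$ with $X\cap(x=y=0)$ and reduces smoothness there to smoothness of the hypersurface $(c=0)\subset\mbP^{n-1}$ at the image point, which holds for general $c$ (your chart computation showing that the $\tilde x$- and $\tilde y$-partials vanish in characteristic $2$ is exactly the content the paper leaves implicit). One small caution: Bertini's smoothness theorem can fail in positive characteristic for arbitrary base-point-free systems (e.g.\ systems spanned by $p$-th powers), so you should instead invoke the standard fact that a general member of the \emph{complete} linear system $|\mcO_{\mbP^{n-1}}(\deg c)|$ is smooth, which is all that you and the paper actually need.
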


\begin{proof}
Note that $X \setminus X^{\circ} = X \cap (x = y = 0)$.
For a point $\msp \in X \setminus X^{\circ}$, $X$ is smooth at $\msp$ if and only if the hypersurface $(c = 0) \subset \mbP^{n-1}$ is smooth at the image of $\msp$ under $X \to \mbP^{n-1}$. 
Clearly the hypersurface $(c = 0) \subset \mbP^{n-1}$ is smooth since $c$ is general, and the assertion follows.
\hfill $\Box$ \end{proof}

\subsection{Analysis of critical points} \label{sec:crit}

We set $\mcL^{\circ} = \mcL|_{Z^{\circ}}$, where we recall $\mcL = \mcO_Z (\nu,1)$.
By Lemma \ref{lem:smZ}, $Z^{\circ}$ is non-singular and by Koll\'ar's result \cite[V.5]{Kollar2} there exists an invertible sheaf $\mcQ^{\circ}$ on $Z^{\circ}$ such that $\mcM^{\circ} := \tau^*\mcQ^{\circ} \subset (\Omega_{X^{\circ}}^{n-1})^{\vee \vee}$, where $\vee \vee$ denotes the double dual.
Let $\mcM$ be the push-forward of the invertible sheaf $\mcM^{\circ}$ via the open immersion $X^{\circ} \inj X$.
By Lemma \ref{lem:smX}, $\mcM$ is an invertible sheaf on $X$.

\begin{Def}{\rm
Let $V$ be a nonsingular variety of dimension $n$ defined over an algebraically closed field $\K$ of characteristic $2$, $\mcN$ an invertible sheaf on $V$ and $s \in H^0 (V, \mcN^2)$ a section.
Let $\msp \in V$ be a point, $\xi$ a local generator of $\mcN$ at $\msp$ and $s = f (x_1,\dots,x_n) \xi^2$ a local description of $s$ with respect to local coordinates $x_1,\dots,x_n$ of $V$ at $\msp$.
We say that $s$ has a {\it critical point} at $\msp$ if the linear term of $f$ is zero.

We say that $s$ has an {\it admissible critical point} at $\msp$ if for a suitable choice of coordinates $x_1,\dots,x_n$, 
\[
f =
\begin{cases} 
\alpha + x_1 x_2 + x_3 x_4 + \cdots + x_{n-1} x_n + g, & \text{if $n$ is even}, \\
\alpha + \beta x_1^2 + x_2 x_3 + \cdots + x_{n-1} x_n + g, & \text{if $n$ is odd},
\end{cases}
\]
where $\alpha, \beta \in \K$, $g = g (x_1,\dots,x_n) \in (x_1,\dots,x_n)^3$ and, in case $n$ is odd, the coefficient of $x_1^3$ in $g$ is nonzero.}
\end{Def}

\begin{Lem} \label{lem:admcrit} 
The section $\bar{z} \in H^0 (Z, \mcL^2)$ has only admissible critical points on $Z^{\circ}$.
\end{Lem}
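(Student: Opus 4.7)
Plan: We analyze the critical points of $\bar{z}$ in the standard affine charts of $R^{\circ}$, reducing each to a normal form using the $R$-automorphisms of the type described in the remark following Definition~\ref{def:wpsbundle}. By the symmetry among $u_0,\ldots,u_{n-1}$, it suffices to treat the charts $U_{u_0,x}$ and $U_{u_0,y}$, and the latter is symmetric to the former under $x \leftrightarrow y$, $a \leftrightarrow b$. In $U_{u_0,x}$ with affine coordinates $u_1,\ldots,u_{n-1},y,\bar{z}$, the hypersurface $Z$ has local equation $G = \tilde{a} + \tilde{b} y^2 + \tilde{c} \bar{z} + \tilde{f} y$. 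A critical point of $\bar{z}|_{Z^{\circ}}$ is a smooth point of $Z$ at which the $du_i$- and $dy$-components of $dG$ vanish, giving $\tilde{f}(\msq) = 0$ together with an equation $\partial_{u_i}\tilde{a}(\msq) + y_0^2 \partial_{u_i}\tilde{b}(\msq) + z_0 \partial_{u_i}\tilde{c}(\msq) + y_0 \partial_{u_i}\tilde{f}(\msq) = 0$ for each $i$. A dimension count shows that for generic $(a,b,c,f)$ the critical locus is finite and every critical point $\msq$ satisfies $\tilde{c}(\msq) \ne 0$ and $\tilde{b}(\msq) \ne 0$, since either vanishing imposes an extra equation.

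Given such an $\msq$, we apply $R$-automorphisms $u_i \mapsto u_i - \alpha_i u_0$, $y \mapsto y - y_0 u_0^{\mu - \lambda}$, and $\bar{z} \mapsto \bar{z} - z_0 u_0^{2\nu - 2\lambda} x^2$ (admissible since $\mu \ge \lambda$ and $\nu \ge \lambda$) to place $\msq$ at the origin. Then $\msq \in Z$ forces $a_0 = 0$, while the critical conditions become $f_0 = 0$ and $a_1 = 0$, where the subscripts denote the homogeneous component in $u_1,\ldots,u_{n-1}$ of the indicated degree. Solving the local equation for $\bar{z}$ using $c_0 = \tilde{c}(\msq) \ne 0$ yields, on $Z^{\circ}$ near the origin and with respect to the local trivialization of $\mcL^2$ by $u_0^{2\nu} x^2$,
\[
\bar{z} = -\frac{1}{c_0}\bigl(a_2(u) + b_0\, y^2 + f_1(u)\, y\bigr) + (\text{terms of degree} \ge 3).
\]

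It remains to check that the degree-$2$ part $Q := a_2 + b_0 y^2 + f_1 y$, together with the cubic tail in the odd-$n$ case, yields an admissible critical point. Over an algebraically closed field of characteristic $2$, a quadratic form in $n$ variables is admissible iff its associated alternating bilinear form $B$ has rank $n$ (when $n$ is even), or has rank $n-1$ with $Q$ nonzero on its one-dimensional radical (when $n$ is odd). The Gram matrix of $B$ in the basis $(u_1,\ldots,u_{n-1},y)$ is the alternating matrix of $a_2$ bordered by a row and column given by $f_1$ (with zero in the final diagonal entry); since $a_2$, $b_0$, and $f_1$ are unconstrained apart from the relations $a_0 = a_1 = f_0 = 0$, the rank condition defines a non-empty open subset of the parameter space of $(a,b,c,f)$. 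In the odd-$n$ case, the extra condition that the cubic Taylor coefficient of $\bar{z}$ along the radical direction be nonzero is an explicit polynomial in $a_3, b_1, c_1, f_2$, not identically zero, and hence also holds generically.

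The main obstacle is to secure admissibility uniformly at all critical points simultaneously, rather than at a single critical point for a single choice of coefficients. This will be handled by observing that admissibility is an open condition on $(a,b,c,f)$, and exhibiting one explicit choice of parameters for which every critical point in every chart (including the symmetric treatment of $U_{u_0,y}$) is admissible; openness then promotes admissibility to the generic choice.
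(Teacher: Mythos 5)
Your local analysis in the chart $U_{u_0,x}$ is essentially the paper's: criticality at a fixed point imposes $n$ independent conditions on the coefficients, and the $2$-jet (resp.\ $3$-jet) of $\bar z$ at such a point can be prescribed freely because $\deg a \ge 3$ and $\deg f \ge 3$, so that the quadratic part $a_2 + f_1 y + b_0 y^2$ ranges over all quadratic forms in $(u_1,\dots,u_{n-1},y)$. But there are two genuine gaps. First, the claimed symmetry ``$U_{u_0,y}$ is obtained from $U_{u_0,x}$ by $x \leftrightarrow y$, $a \leftrightarrow b$'' fails exactly where it is needed, namely on the locus $(x=0)$, which is the only part of $Z^{\circ}$ not covered by the charts $U_{u_i,x}$. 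There the role of $a_2$ is played by $b_2$, and $\deg b = \lambda-\mu+\nu$ can be as small as $1$ (e.g.\ $\lambda=1$, $\mu=\nu$), in which case $b_2=0$ identically and the quadratic part at a critical point with $x=0$ would be $a_0x^2+f_1(u)x$, whose polar form has rank $\le 2$: such a critical point would \emph{not} be admissible once $n\ge 4$. The statement is rescued not by symmetry but by showing that for general $a,b,c,f$ there are \emph{no} critical points on $(x=0)$ at all; the paper does this by observing that criticality there imposes $n$ conditions (using only $\deg b\ge 1$) while $(x=0)\cap Z^{\circ}$ has dimension $n-1$. Your proposal omits this step and, read literally, would try (and fail) to prove admissibility at such points.

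Second, your conclusion is a promissory note: you defer the passage from ``admissibility is achievable at any one prescribed critical point'' to ``all critical points of a general member are admissible'' to an unexhibited ``explicit choice of parameters for which every critical point in every chart is admissible,'' together with an openness claim that itself needs an argument (critical points may collide or migrate to $\Xi_Z$ or to $(c=0)$ as the parameters vary). No global example is needed. The incidence count you already use for finiteness closes the proof: for each fixed $\msq$ with $x\ne 0$, the locus of $(a,b,f)$ for which $\msq$ is a \emph{non-admissible} critical point has codimension $\ge n+1$ (the $n$ linear conditions for criticality plus at least one more, since within that linear subspace the relevant jet is unconstrained and non-admissibility is a proper closed condition on jets); as $\msq$ varies in an $n$-dimensional family, a general choice of parameters therefore has no non-admissible critical points. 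This is exactly the paper's ``dimension counting argument.'' Two minor points: the condition $\tilde b(\msq)\ne 0$ you impose is never used, and the paper's definition of admissibility in odd dimension places no condition on $\beta$ (the value of the quadratic form on the radical), only on the coefficient of $x_1^3$.
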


\begin{proof}
We choose and fix a general $c \in \K [u]$ so that the hypersurface $(c = 0) \subset \mbP^{n-1}$ is non-singular.
Clearly $\bar{z}$ does not have a critical point on $(c = 0) \subset Z^{\circ}$.
On $Z^{\circ} \cap (c \ne 0)$, the section $c$ is invertible and thus the section $\bar{z}$ has an admissible critical point if and only if the section
\[
s := c (a x^2 + b y^2 + f x y) \ (= c^2 \bar{z})
\]
has an admissible critical point.
It is then enough to show that the section $s$, viewed as a section on $Q = Q_n (\lambda,\mu)$, has only admissible critical points on $U_c = (c \ne 0) \subset Q$ for general $a,b$ and $f$.
We set $U_x = (x \ne 0) \subset Q$ and $\Pi_y = (x = 0) \cap (y \ne 0) \subset Q$ so that $Q = U_x \cup \Pi_y$.

We first show that $s$ does not have a critical point on $\Pi_y \cap U_c$.
Let $\msp \in \Pi_y \cap U_c$ be a point.
We may assume $\msp = (1\!:\!0\!:\!\cdots\!:\!0 ; 0\!:\!1)$.
We work on the open subset $U_{u_0,y} = (u_0 \ne 0) \cap (y \ne 0) \subset Q$ which is the affine space with coordinates $u_1,\dots,u_{n-1}$ and $x$.
For $e = e (u_0,\dots,u_{n-1})$, we set $\tilde{e} = e (1,u_1,\dots,u_{n-1})$.
Moreover we denote by $\tilde{e}_i$ the degree $i$ part of $\tilde{e}$.
Then the restriction of $s$ to $U_{u_0,y}$ is $\tilde{c} (\tilde{a} x^2 + \tilde{b} + \tilde{f} x)$ and the point $\msp$ corresponds to the origin.
Then $s$ has a critical point at $\msp$ if and only if 
\[
\tilde{c}_0 (\tilde{b}_1 + \tilde{f}_0 x) + \tilde{c}_1 \tilde{b}_0 = 0.
\]
Note that $\tilde{c}_0 \ne 0$.
Since $\deg b \ge 1$, this imposes $n$ independent conditions on $a,b,f$. 
Thus, for any point $\msp \in \Pi_y$, $n$ conditions are imposed in order for $s$ to have a critical point at $\msp$.
By counting dimensions we conclude that $s$ does not have a critical point on $\Pi_y \cap U_c$ since $\dim \Pi_y = n-1$.

Let $\msp \in U_x \cap U_c$ be a point.
We may assume $\msp = (1\!:\!0\!:\!\cdots\!:\!0 ; 1\!:\!0)$.
We work on the open subset $U_{u_0,x} = (u_0 \ne 0) \cap (x \ne 0) \subset R$ which is the affine space with coordinates $u_1,\dots,u_{n-1}$ and $y$.
We have $s|_{U_{u_0,y}} = \tilde{c} (\tilde{a} + \tilde{b} y^2 + \tilde{f} y)$.
Let $\ell, q$ and $h$ be the linear, quadratic and cubic parts of $s|_{U_{u_0,y}}$, respectively.
We have
\[
\ell = \tilde{c}_0 (\tilde{a}_1 + \tilde{f}_0 y) + \tilde{c}_1 \tilde{a}_0.
\]
Since $\deg a \ge 1$, $n$ conditions are imposed in order for $s$ to have a critical point at $\msp$.
It remains to show the existence of a section $s = c (a x^2 + b y^2 + f x y)$ which has an admissible critical point at $\msp$.
Now suppose that $s$ has a critical point at $\msp$, that is, $\ell = 0$.
This implies that $\tilde{f}_0 = 0$ and $\tilde{a}_1 = \tilde{a}_0 \tilde{c}_1/\tilde{c}_0$.
Then, for the quadratic and cubic parts, we have
\[
\begin{split}
q &= \tilde{c}_0 (\tilde{a}_2 + \tilde{b}_0 y^2 + \tilde{f}_1 y) + \frac{\tilde{a}_0 \tilde{c}_1^2}{\tilde{c}_0} + \tilde{c}_2 \tilde{a}_0, \\
h &= \tilde{c}_0 (\tilde{a}_3 + \tilde{b}_1 y^2 + \tilde{f}_2 y) + \cdots.
\end{split}
\]
Since $\deg a \ge 3$ and $\deg f \ge 3$, we can choose $a,b,f$ so that
\[
q = 
\begin{cases}
y u_1 + u_2 u_3 + u_4 u_5 +\cdots + u_{n-2} u_{n-1}, & \text{if $n$ is even}, \\
y u_1 + u_2 u_3 + u_4 u_5 + \cdots + u_{n-3} u_{n-2} + u_{n-1}^2, & \text{if $n$ is odd}.
\end{cases}
\]
In case $n$ is even, the section $s$ has a nondegenerate critical point at $\msp$ and we are done.
Suppose that $n$ is odd.
Since $\deg a \ge 3$, then we can choose $a,b,f$ so that $q$ is as above and the coefficient of $u_{n-1}^3$ in $h$ is non-zero.
For this choice of $a,b,c,f$, the section $s$ has an admissible critical point at $\msp$ and the proof is completed by the dimension counting argument.
\hfill $\Box$ \end{proof}

\begin{Prop} \label{prop:existresol}
Let the notation and assumption as above.
Assume in addition that $\nu \ge n$.
Then there exists a universally $\CH_0$-trivial resolution $\varphi \colon Y \to X$ of singularities such that $H^0 (Y,\Omega_Y^{n-1}) \ne 0$.
In particular $Y$ is not universally $\CH_0$-trivial.
\end{Prop}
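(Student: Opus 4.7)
The plan is to construct $\varphi\colon Y \to X$ as a single blowup of the (finite) singular locus of $X$ and to exhibit a nonzero global section of $\Omega^{n-1}_Y$ by pulling back an explicit section of an invertible sheaf $\mcM$ on $X$ arising from the purely inseparable cover $\tau\colon X \to Z$.

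First I identify $\mcM$ explicitly. In characteristic $2$ the relative cotangent sequence for $\tau$ breaks into two short exact sequences
\[
0 \to \tau^*\mcL^{-2} \stackrel{ds}{\longrightarrow} \tau^*\Omega^1_{Z^\circ} \to \mcF \to 0, \qquad 0 \to \mcF \to \Omega^1_{X^\circ} \to \tau^*\mcL^{-1} \to 0,
\]
so that $\det\mcF \cong \tau^*(\omega_{Z^\circ} \otimes \mcL^{\otimes 2})$ and, via the injection $\mcF \hookrightarrow \Omega^1_{X^\circ}$, one gets $\tau^*(\omega_{Z^\circ} \otimes \mcL^{\otimes 2}) \hookrightarrow \Omega^{n-1}_{X^\circ}$. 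Adjunction on $Z \subset R$ yields $\omega_Z \cong \mcO_Z(-(n+\nu), -2)$, whence Koll\'ar's sheaf is $\mcQ := \omega_Z \otimes \mcL^{\otimes 2} \cong \mcO_Z(\nu - n, 0)$ and $\mcM := \tau^*\mcQ \cong \mcO_X(\nu - n, 0)$. By Lemma~\ref{lem:smX}, $\mcM^\circ$ extends to an invertible sheaf on all of $X$, and when $\nu \ge n$ any degree $\nu-n$ monomial on $\mbP^{n-1}$ pulls back to a nonzero global section of $\mcM$.

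Next I construct the resolution. By Lemmas~\ref{lem:smZ}, \ref{lem:smX} and \ref{lem:admcrit}, $\Sing(X)$ is a finite set of points, each lying over an admissible critical point of $\bar{z}$ on $Z^\circ$. Since $\K$ is algebraically closed of characteristic $2$, the substitution $w \mapsto w + \sqrt{\alpha} + \sqrt{\beta}\,x_1$ brings $X$ into one of the \'etale-local normal forms
\[
w^2 = x_1 x_2 + x_3 x_4 + \cdots + x_{n-1} x_n + g \ (n \text{ even}),
\]
\[
w^2 = x_2 x_3 + \cdots + x_{n-1} x_n + \gamma x_1^3 + \cdots \ (n \text{ odd, } \gamma \ne 0).
\]
Letting $\varphi\colon Y \to X$ be the blowup of $\Sing(X)$, a chart-by-chart computation shows $Y$ is smooth; in the odd case the cubic coefficient $\gamma$ supplies a nonzero $\partial/\partial x_1$-term in one of the charts, which is precisely what makes a single blowup suffice. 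The exceptional fibre over each singular point is a smooth quadric in $\mbP^n$ if $n$ is even, or a projective cone over a smooth quadric in $\mbP^{n-1}$ if $n$ is odd; both are rational, and hence universally $\CH_0$-trivial. It follows by the standard fibrewise criterion (see \cite{CTP1}) that $\varphi$ is universally $\CH_0$-trivial.

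Finally I establish the inclusion $\varphi^*\mcM \subset \Omega^{n-1}_Y$; combined with the preceding, this gives $H^0(Y, \Omega^{n-1}_Y) \supset \varphi^*H^0(X,\mcM) \ne 0$, and \cite[Lemma~2.2]{Totaro} then yields that $Y$ is not universally $\CH_0$-trivial. The inclusion is automatic on $X^\circ$ by the construction of $\mcM^\circ$. Along the exceptional divisor it must be verified by an explicit local computation that a local generator of $\mcM$---expressed through $dw$ and the relation $ds = 0$---pulls back to a regular $(n-1)$-form in each blowup chart, with no twist by the exceptional divisor; the admissibility of each singularity (the prescribed quadratic part, together with the prescribed nonzero cubic term when $n$ is odd) is exactly what rules out an apparent pole along the exceptional divisor. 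This regularity check, which reduces to a direct local calculation at each admissible critical point, is the main technical obstacle of the argument.
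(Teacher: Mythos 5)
Your proposal follows essentially the same route as the paper: identify $\mcM \cong \mcO_X(\nu-n,0)$ via Koll\'ar's construction and adjunction on $Z \subset R$, resolve the (finitely many) admissible critical points by blowing them up, note that $\nu \ge n$ gives a nonzero section of $\mcM$, and conclude with \cite[Lemma~2.2]{Totaro}. The one step you flag as an unperformed ``local calculation'' --- that $\varphi^*\mcM$ acquires no pole along the exceptional divisors and that $\varphi$ is a universally $\CH_0$-trivial resolution --- is precisely what the paper delegates to \cite[Proposition~4.1]{Okada} and \cite{CTP2}, so your argument matches the paper's own level of detail and is correct modulo that citation.
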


\begin{proof}
By \cite[Proposition 4.1]{Okada} or \cite{CTP2}, if the singularities of $X$ correspond to admissible critical points of the section $\bar{z}$, then there exists a universally $\CH_0$-trivial resolution $\varphi \colon Y \to X$ such that $\varphi^*\mcM \inj \Omega_Y^{n-1}$ (in fact, $\varphi$ is just the composite of blowups at each (isolated) singular point).
Thus, by Lemma \ref{lem:admcrit}, $X$ admits such a resolution.
The branch divisor $(\bar{z} = 0)$ is clearly reduced and, by \cite[Lemma V.5.9]{Kollar2}, we have an isomorphism
\[
\mcM^{\circ} \cong \tau^* (\omega_{Z^{\circ}} \otimes {\mcL^{\circ}}^2)
\cong \mcO_{X^{\circ}} (\nu - n,0),
\]  
so that $\mcM \cong \mcO_X (\nu - n,0)$.
By the assumption we have $\nu - n \ge 0$, which implies $H^0 (X,\mcM) \ne 0$.
Thus $H^0 (Y,\Omega_Y^{n-1}) \ne 0$ and by \cite[Lemma 2.2]{Totaro}, $Y$ is not universally $\CH_0$-trivial.
\hfill $\Box$ \end{proof}


\subsection{Proof of theorems and corollaries}

\begin{Thm} \label{thm:genthm}
Suppose that the ground field is $\mbC$ and let $(\lambda,\mu,\nu)$ be a triplet of integers such that $0 < \lambda \le \mu \le \nu \le \lambda + \mu$.
If in addition $\nu \ge n$, then a very general embedded conic bundle $\pi \colon X \to \mbP^{n-1}$ of type $[\lambda,\mu,\nu]$ is not stably rational.
\end{Thm}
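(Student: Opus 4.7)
The plan is to apply the specialisation theorem of Colliot-Th\'el\`ene--Pirutka (\cite[Th\'eor\`eme~1.14]{CTP1}, in its mixed-characteristic form) combined with the analysis already carried out in Sections~\ref{sec:sing} and \ref{sec:crit}. Recall that a smooth stably rational projective variety is universally $\CH_0$-trivial; hence it suffices to exhibit, inside the complete linear system $|\mcO_P(\delta,2)|$ on $P = P_n(\lambda,\mu,\nu)$ over $\mbC$, a flat family whose very general member is a smooth embedded conic bundle of type $[\lambda,\mu,\nu]$ and a special member over a field of characteristic $2$ which admits a universally $\CH_0$-trivial resolution that is not itself universally $\CH_0$-trivial.

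First, I would set up the specialisation. Choose a DVR $A$ of mixed characteristic $(0,2)$ with residue field $\K = \overline{\mbF}_2$ and fraction field $K$ of characteristic zero; concretely, one can take $A$ to be the strict henselisation of the localisation of $\mbZ$ at the prime $2$, or its completion. Over $\Spec A$ consider the relative weighted projective space bundle $P_A = P_{n,A}(\lambda,\mu,\nu)$, and inside it a flat family $\mcX \to \Spec A$ of hypersurfaces of bi-degree $(\delta,2)$, constructed so that the special fibre $\mcX_0$ over $\K$ is exactly the hypersurface $X \subset P_{\K}$ defined by the equation (\ref{defeq}) with $a,b,c,f$ general polynomials over $\K$, and so that the generic fibre $\mcX_K$ is a general member of $|\mcO_{P_K}(\delta,2)|$. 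This can be done by lifting the coefficients of (\ref{defeq}) to $A$ and adding generic further perturbations of the remaining monomials with coefficients in the maximal ideal of $A$. Base-changing along $A \to \mbC$ produces a very general complex member of the linear system; by Lemma~\ref{lem:nonsingchar0} (applied over $\mbC$, where all four coefficients $a,b,c,f$ remain general along with the additional perturbations) the complex generic fibre is smooth and is an embedded conic bundle of type $[\lambda,\mu,\nu]$ in the sense of Proposition~\ref{prop:nonsingcbfam}.

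Next, I would apply Proposition~\ref{prop:existresol} to the special fibre $\mcX_0$: since $\nu \ge n$, there exists a universally $\CH_0$-trivial resolution $\varphi \colon Y \to \mcX_0$ such that $H^0(Y,\Omega_Y^{n-1}) \ne 0$, and by \cite[Lemma~2.2]{Totaro} the resolution $Y$ is not universally $\CH_0$-trivial. This is precisely the input required by the Colliot-Th\'el\`ene--Pirutka specialisation theorem in its mixed-characteristic form, which then forces the smooth complex generic fibre $\mcX_{\mbC}$ to be not universally $\CH_0$-trivial, hence not stably rational. Because the complex generic fibre we just produced sits in $|\mcO_{P}(\delta,2)|$ as a very general point of a non-empty Zariski-open locus, a standard countable-union argument (working over a countable sequence of such families or invoking the rigidity statement in Voisin--CTP) upgrades this to the very general complex conic bundle of type $[\lambda,\mu,\nu]$, which is the claimed statement.

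The only place that would need genuine care rather than bookkeeping is verifying that the resolution $\varphi$ produced in Proposition~\ref{prop:existresol} really satisfies the hypotheses of \cite[Th\'eor\`eme~1.14]{CTP1} in mixed characteristic: namely that the exceptional locus of $\varphi$ lies over the singular locus of $\mcX_0$ and that every fibre of $\varphi$ over a (not necessarily closed) point of $\mcX_0$ is universally $\CH_0$-trivial. Both are built into the construction used in Proposition~\ref{prop:existresol}, which is a composition of blow-ups at isolated admissible critical points whose local model produces exceptional divisors that are (split, smooth) quadrics over the residue field of each singular point; these are universally $\CH_0$-trivial, so $\varphi$ is universally $\CH_0$-trivial in the required sense. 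With this verification in place the theorem follows.
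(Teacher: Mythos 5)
Your argument is correct and follows essentially the same route as the paper: reduce modulo $2$ to the special equation \eqref{defeq}, invoke Proposition~\ref{prop:existresol} together with the mixed-characteristic specialisation theorem of Colliot-Th\'el\`ene--Pirutka, and then pass from one smooth complex member that is not universally $\CH_0$-trivial to the very general one. The only organisational difference is that the paper carries out this last step as a second, purely complex degeneration (via Voisin and Nicaise--Shinder) from a very general member of the full linear system to the lift $V$ of the special equation, whereas you fold the missing monomials into the mixed-characteristic family with coefficients in the maximal ideal; both reduce to the same countability/specialisation argument.
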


\begin{proof}
For a field (or more generally a ring) $K$, we denote by $P_K$ the $\mbP^2$-bundle $P_n (\lambda,\mu,\nu)$ over $\mbP^{n-1}$ defined over $K$.
Let $\K$ be an algebraically closed field of characteristic $2$ and let $X \to \mbP^{n-1}$ be a very general hypersurface in $P_{\K}$ defined by an equation of the form \eqref{defeq}.
We take a mixed characteristic discrete valuation ring $A$ whose residue field is $\K$, for example the Witt ring, and then we lift $X$ to a hypersurface $\mcX$ of $P_A$ defined  by an equation of the form \eqref{defeq}.
We choose and fix an embedding of the quotient field of $A$ into $\mbC$ and set $V = \mcX \times_A \mbC$.
Then $V$ is a very general hypersurface of $P_{\mbC}$ defined by an equation of the form \eqref{defeq}.
By Proposition \ref{prop:existresol}, we can apply the specialization theorem \cite[Th\'eor\`eme 1.14]{CTP1} and conclude that $V$ is not universally $\CH_0$-trivial.
Note that $V$ is nonsingular by Lemma \ref{lem:nonsingchar0}.
Note also that $V$ is not a very general conic bundle of type $[\lambda,\mu,\nu]$.
However a very general conic bundle of type $[\lambda,\mu,\nu]$ degenerates (over a complex curve) to $V$, hence the assertion follows from the specialization argument \cite[Theorem 2.1]{Voisin} (or by \cite[Theorem 4.2.10]{NS}). 
\hfill $\Box$ \end{proof}

Now we can prove the main theorem and corollaries in Section \ref{sec:intro}.

\begin{proof}[Proof of \emph{Theorem \ref{thm:ample}}]
The assertion (1) follows from Proposition \ref{prop:ratsingfam}.

Let $\pi \colon X \to \mbP^{n-1}$ be a non-singular embedded conic bundles over $\mbP^{n-1}$.
By Proposition \ref{prop:nonsingcbfam}, we may assume that it is of type $[\lambda,\mu,\nu]$, where either $0 < \lambda \le \mu \le \nu \le \lambda + \mu$ or $\lambda = \mu = \nu = 0$.
By adjunction we have $\mcO_X (-K_X) \cong \mcO_X (n,1)$. 
The complete linear system $|\mcO_P (n,1)|$, where $P = P_n (\lambda,\mu,\nu)$, is base point free if and only if $n \ge \nu$.
This shows that $\mcO_P (n,1)$, and hence $\mcO_X (n,1)$, is ample if $n < \nu$.
Since $-K_X$ is not ample by assumption, we have $n \ge \nu$.
Therefore (2) follows from Theorem \ref{thm:genthm}.
\hfill $\Box$ \end{proof}

\begin{proof}[Proof of \emph{Corollaries \ref{cor1} and \ref{cor2}}]
Let $X$ be a very general hypersurface of bi-degree $(d,2)$ in $\mbP^{n-1} \times \mbP^2$.
Then $\mcO_X (-K_X) \cong \mcO_X (n-d,1)$.
By assumption $d \ge n$ and this implies that $-K_X$ is not ample.
Thus $X$ is not stably rational by Theorem \ref{thm:ample}.

Let $X$ be a double cover of $\mbP^{n-1} \times \mbP^1$ branched along a very general divisor of bi-degree $(2 d,2)$.
Then $X$ is a very general member of $|\mcO_P (2 d,2)|$, where $P = P_n (0,0,d)$, and hence it is of type $[d,d,2d]$.
By the assumption we have $2 d \ge n$.
Thus $X$ is not stably rational by Theorem \ref{thm:genthm}
\hfill $\Box$ \end{proof}

\begin{proof}[Proof of \emph{Corollary \ref{cor:discr}}]
Let $\pi \colon X \to \mbP^{n-1}$ be as in Corollary \ref{cor:discr}.
Then we may assume that it is of type $[\lambda,\mu,\nu]$, where $0 < \lambda \le \mu \le \nu \le \lambda+\nu$ or $\lambda = \mu = \nu = 0$.
The discriminant divisor $\Delta$ is a hypersurface in $\mbP^{n-1}$ of degree $\lambda + \mu + \nu$.
The condition $|3 K_{\mbP^{n-1}} + \Delta| \ne \emptyset$ is equivalent to the condition $\lambda + \mu + \nu \ge 3n$.
The latter implies $\nu \ge n$ since $\lambda \le \mu \le \nu$.
Thus (1) follows from Theorem \ref{thm:genthm}.

Now suppose in addition that $n \ge 7$ and $\pi \colon X \to \mbP^{n-1}$ is standard.
Note that $X$ is defined in $P_n (\lambda,\mu,\nu)$ by an equation of the form 
\[
a x^2 + b y^2 + c z^2 + f x y + g x z + h y z = 0,
\]
where $a,\dots,h \in \mbC [u]$.
If $\deg c = \lambda + \mu - \nu > 0$, then the system of equations $a = b = \cdots = h = 0$ has a non-trivial solution on $\mbP^{n-1}$ since $n \ge 7$.
This implies that $\pi$ cannot be flat, in particular, not standard.
Thus $\nu = \lambda + \mu$ and in this case the condition $|2 K_{\mbP^{n-1}} + \Delta| = |\mcO_{\mbP^{n-1}} (2 (\nu - n))| \ne \emptyset$ is equivalent to $\nu \ge n$ which implies stable non-rationality of $X$ again by Theorem \ref{thm:genthm}.
This proves (2).
\hfill $\Box$ \end{proof}

\providecommand{\bysame}{\leavevmode\hbox to3em{\hrulefill}\thinspace}
%
%

\bibliographystyle{amsalpha}

\begin{thebibliography}{99}

\bibitem{ABBP}
A.~Auel, C.~B\"ohning, H.-C.~Graf von Bothmer, and A.~Pirutka,
\emph{Conic bundles with nontrivial unramified Brauer group over threefolds}, preprint, 2016.
\href{https://arxiv.org/abs/1610.04995}{arXiv:1610.04995}


\bibitem{BB}
C.~B\"ohning and H.-C.~Graf von Bothmer,
\emph{On stable rationality of some conic bundles and moduli spaces of Prym curves},
Comment. Math. Helv. \textbf{93} (2018), no. 1, 133--155.
\MR{3777127}

\bibitem{CG}
H.~Clemens and P.~Griffiths,
\emph{The intermediate Jacobian of the cubic threefold},
Ann. of Math. (2) \textbf{95} (1972), 281--356.
\MR{0302652}

\bibitem{CTC}
J.-L.~Colliot-Th\'el\`ene and D.~Coray,
\emph{L'\'equivalence rationnelle sur les points ferm\'es des surfaces rationnelles fibr\'ees en coniques},
Compositio Math. \textbf{39} (1979), no.~3, 301--332.
\MR{0550646}

\bibitem{CTP1}
J.-L.~Colliot-Th\'el\`ene and A.~Pirutka,
\emph{Hypersurfaces quartiques de dimension $3$: non-rationalit\'e stable},
Ann. Sci. \'Ec. Norm. Sup\'er. (4) \textbf{49} (2016), no.~2, 371--397.
\MR{3481353}

\bibitem{CTP2}
J.-L.~Colliot-Th\'el\`ene and A.~Pirutka,
\emph{Cyclic covers that are not stably rational},
Izv. Ross. Akad. Nauk Ser. Mat. \textbf{80} (2016), no.~4, 35--48 (Russian).
Translation in Izv. Math. \textbf{80} (2016), no. 4, 665--677.
\MR{3535357}

\bibitem{Fulton}
W.~Fulton,
\emph{Intersection theory}. Second edition.
Ergebnisse der Mathematik und ihrer Grenzgebiete (3), vol. 2, Springer-Verlag, Berlin, 1998.
\MR{1644323}

\bibitem{HKT}
B.~Hassett, A.~Kresch, and Y.~Tschinkel,
\emph{Stable rationality and conic bundles},
Math. Ann. \textbf{365} (2016), no.~3-4, 1201--1217.
\MR{3521088}

\bibitem{HPT}
B.~Hassett, A.~Pirutka, and Y.~Tschinkel,
\emph{Stable rationality of quadric surface bundles over surfaces}, preprint, 2016.
\href{https://arxiv.org/abs/1603.09262}{arXiv:1603.09262}

\bibitem{Iskovskikh}
V.~A.~Iskovskikh,
\emph{On a rationality criterion for conic bundles},
Mat. Sb. \textbf{187} (1996), no.~7, 75--92 (Russian). Translation in Sb. Math. \textbf{187} (1996), no.~7, 1021--1038.
\MR{1404188}

\bibitem{Kollar1}
J.~Koll\'{a}r,
\emph{Nonrational hypersurfaces},
J. Amer. Math. Soc. \textbf{8} (1995), no.~1, 241--249.
\MR{1273416}

\bibitem{Kollar2}
J.~Koll\'{a}r,
\emph{Rational curves on algebraic varieties},
Ergebnisse der Mathematik und ihrer Grenzgebiete (3), vol. 32,
Springer-Verlag, Berlin, 1996. 
\MR{1440180}


\bibitem{MP}
S.~Mori and Y.~Prokhorov,
\emph{On $\mathbb{Q}$-conic bundles},
Publ. Res. Inst. Math. Sci. \textbf{44} (2008), no.~2, 315--369.
\MR{2426350}

\bibitem{NS}
J.~Nicaise and E.~Shinder,
\emph{The motivic nearby fiber and degeneration of stable rationality},
preprint, 2017.
\href{https://arxiv.org/abs/1708.02790}{arXiv:1708.02790}

\bibitem{Okada}
T.~Okada,
\emph{Stable rationality of cyclic covers of projective spaces},
preprint, 2016.
\href{https://arxiv.org/abs/1604.08417}{arXiv:1604.08417}

\bibitem{Sarkisov}
V.~G.~Sarkisov,
\emph{On conic bundle structures},
Izv. Akad. Nauk SSSR Ser. Mat. \textbf{46} (1982), no.~2, 371--408, 432.
\MR{0651652}

\bibitem{Shokurov}
V.~V.~Shokurov,
\emph{Prym varieties: theory and applications}, Izv. Acad. SSSR Ser. Mat. \textbf{47} (1983), no. 4, 785--855 (Russian). Tranlation in Math. USSR-Izv. {\bf 23} (1984), no. 1, 83--147.
\MR{0712095}

\bibitem{Totaro}
B.~Totaro,
\emph{Hypersurfaces that are not stably rational},
J. Amer. Math. Soc. \textbf{29} (2016), no.~3, 883--891.
\MR{3486175}


\bibitem{Voisin}
C.~Voisin,
\emph{Unirational threefolds with no universal codimension $2$ cycle},
Invent. Math. \textbf{201} (2015), no.~1, 207--237.
\MR{3359052}
\end{thebibliography}
\bibliographymark{References}
\def\cprime{$'$}

\end{document}